  \crefname{theorem}{Theorem}{Theorems}
  \crefname{thm}{Theorem}{Theorems}
  \crefname{lemma}{Lemma}{Lemmas}
  \crefname{lem}{Lemma}{Lemmas}
  \crefname{remark}{Remark}{Remarks}
  \crefname{prop}{Proposition}{Propositions}
\crefname{notation}{Notation}{Notations}
\crefname{claim}{Claim}{Claims}
  \crefname{defn}{Definition}{Definitions}
  \crefname{corollary}{Corollary}{Corollaries}
  \crefname{section}{Section}{Sections}
  \crefname{figure}{Figure}{Figures}
    \crefname{assumption}{Assumption}{Assumptions}
\newtheorem{thm}{Theorem}[section]
\newtheorem{lemma}[thm]{Lemma}
\newtheorem{corollary}[thm]{Corollary}
\newtheorem{prop}[thm]{Proposition}
\newtheorem{defn}[thm]{Definition}
\numberwithin{equation}{section}
\theoremstyle{definition}
\newtheorem{remark}[thm]{Remark}
\def\cP{\mathcal{P}}
\def\cH{\mathcal{H}}
\def \ve {\varepsilon}
\def \Gd {G^{\#\delta}}
\def\P{\mathbb{P}}
\def\E{\mathbb{E}}
\def\C{\mathbb{C}}
\def\R{\mathbb{R}}
\def\Z{\mathbb{Z}}
\def\N{\mathbb{N}}
\def\D{\mathbb{D}}
\def  \p- {p\textunderscore}
\def\eps{\varepsilon}
\def\ph{\varphi}
\def \d {{\delta}}
\DeclareMathOperator{\var}{Var}
\DeclareMathOperator{\Arg}{Arg}
\renewcommand{\i}{\text{\textnormal{int}}}
\newcommand{\abs}[1]{ \lvert #1 \rvert}
\title{A note on dimers and T-graphs}
\author{Nathana\"el Berestycki\thanks{Supported in part by EPSRC grants 
EP/L018896/1 and EP/I03372X/1} \and Benoit Laslier\thanks{Supported in part by 
EPSRC grant EP/I03372X/1} \and Gourab Ray\thanks{Supported in part by EPSRC 
grant EP/I03372X/1}}
\begin{document}

\maketitle
\begin{abstract}
The purpose of this note is to give a succinct summary of some basic properties of T-graphs which arise in the study of the dimer model. We focus in particular on the relation between the dimer model on the heaxgonal lattice with a given slope, and the behaviour of the uniform spanning tree on the associated T-graph. 
Together with the main result of the companion paper \cite{BLR16}, the results here show Gaussian free field fluctuations for the height function in some dimer models.
%
\end{abstract}



%
%
%
%

\section{Introduction}

The dimer model is a classical model from statistical mechanics which can be thought of 
as defining a natural random surface through its height function. 
We refer to \cite{KenyonSurvey} for a detailed survey on the subject. A key tool in the early study of the dimer model is Temperley's bijection, which relates the dimer model on domains satisfying certain boundary conditions to uniform spanning trees on a modified graph. This correspondence was extended first by Kenyon, Propp and Wilson \cite{KPWtemperley}  to so-called Temperleyan graphs. A more delicate but also more general extension was discovered by Kenyon and Sheffield \cite{dimer_tree}. This can be used in particular to relate the dimer model on (subsets of) the hexagonal lattice to spanning trees on modified graphs called T-graphs. This correspondence is the main concern of this paper. Our goal is to make precise a number of facts which are probably known in the folklore but for which we could not find a reference, as well as to prove a number of new estimates which are needed for the study of the dimer model on the hexagonal lattice. 

Our results can be summarised as follows:
\begin{enumerate}

\item We show that the height function of a dimer configuration is given by the winding of branches in the associated spanning tree on the T-graph (\cref{lem:heightwinding}).

\item We study the large-scale behaviour of simple random walk on T-graphs. In particular, we prove recurrence (\cref{T:recurrence}) as well as a uniform crossing estimate (\cref{prop:satisfy_crossing}).

\item\label{it:planarity} We construct discrete domains $U^\delta$ on the T-graph associated with any given continuum domain $D$ on the plane with locally connected boundary (\cref{prop:domain}). More precisely, given a plane $P \subset \R^3$ and a domain $D \subset P$ with locally connected boundary, we construct domains $U^\delta$ on an associated T-graph such that the height function of any dimer configuration on $U^\delta$ satisfies $h^\d (\partial U^\d) \to \partial D$ as closed sets in $\R^3$ when $\delta \to 0$. 

\item Finally, we prove in \cref{T:gibbs} that the correspondence between dimers and uniform spanning trees on finite portions of the hexagonal lattice considered by Kenyon and Sheffield \cite{dimer_tree} extend to the full plane. More precisely, for a given plane $P \subset \R^3$ as above, the local limits of uniform spanning trees on the associated T-graph, 
corresponds to whole plane dimer configurations on the hexagonal lattice whose law is the unique ergodic Gibbs measure on dimers with that slope constructed by Sheffield \cite{SSthesis}. Equivalently, the weak limit of uniform spanning trees on suitable large portions of T-graphs (which exists by a result in the companion paper \cite{BLR16}), is in `bijection' with the whole plane Gibbs measure on dimers of the corresponding slope.

\end{enumerate}

Together with the main result of \cite{BLR16} (Theorem 1.2 in that paper), this implies that the height function of dimer configurations on the hexagonal lattice with planar boundary conditions (see \cref{it:planarity} above), has fluctuations given in the scaling by the Gaussian free field, as stated precisely in Theorem 1.1 of \cite{BLR16}.

\section{Definition and construction}

\subsection{Lozenge tilings and height function}\label{sec:height_intro}

We will first give a short introduction to the lozenge tiling model to setup our 
notations and make the paper more self-contained. For more details see 
\cite{las2013lozenge,KenyonSurvey} for example.

We call \textbf{lozenge tiling} of a domain $D \subset \C$ a tiling of $D$ by 
tiles which are made by gluing two equilateral triangles with edge length 
$1$ (see 
\cref{fig:exemple_tiling}). Lozenge tilings are in bijection with perfect 
matchings of subsets of the hexagonal lattice and with surfaces in $\R^3$ formed 
by the boundary of stacks of unit cubes (step surfaces). These bijections should be visually 
evident from \cref{fig:exemple_tiling} and are standard so we will not give more 
details on them. Using these bijections, we will freely identify tilings, 
perfect matchings and step surfaces. When talking about perfect matchings, we 
will write $\cH$ for the infinite hexagonal lattice and $G$ for subgraphs of 
$\cH$. We will call one of the bipartite class of vertices in $\cH$ black and 
the other white and write $b$ and $w$ for vertices in these classes 
respectively.
\begin{wrapfigure}{R}{.4\textwidth}
\begin{center}
\includegraphics[width = 0.3 \textwidth]{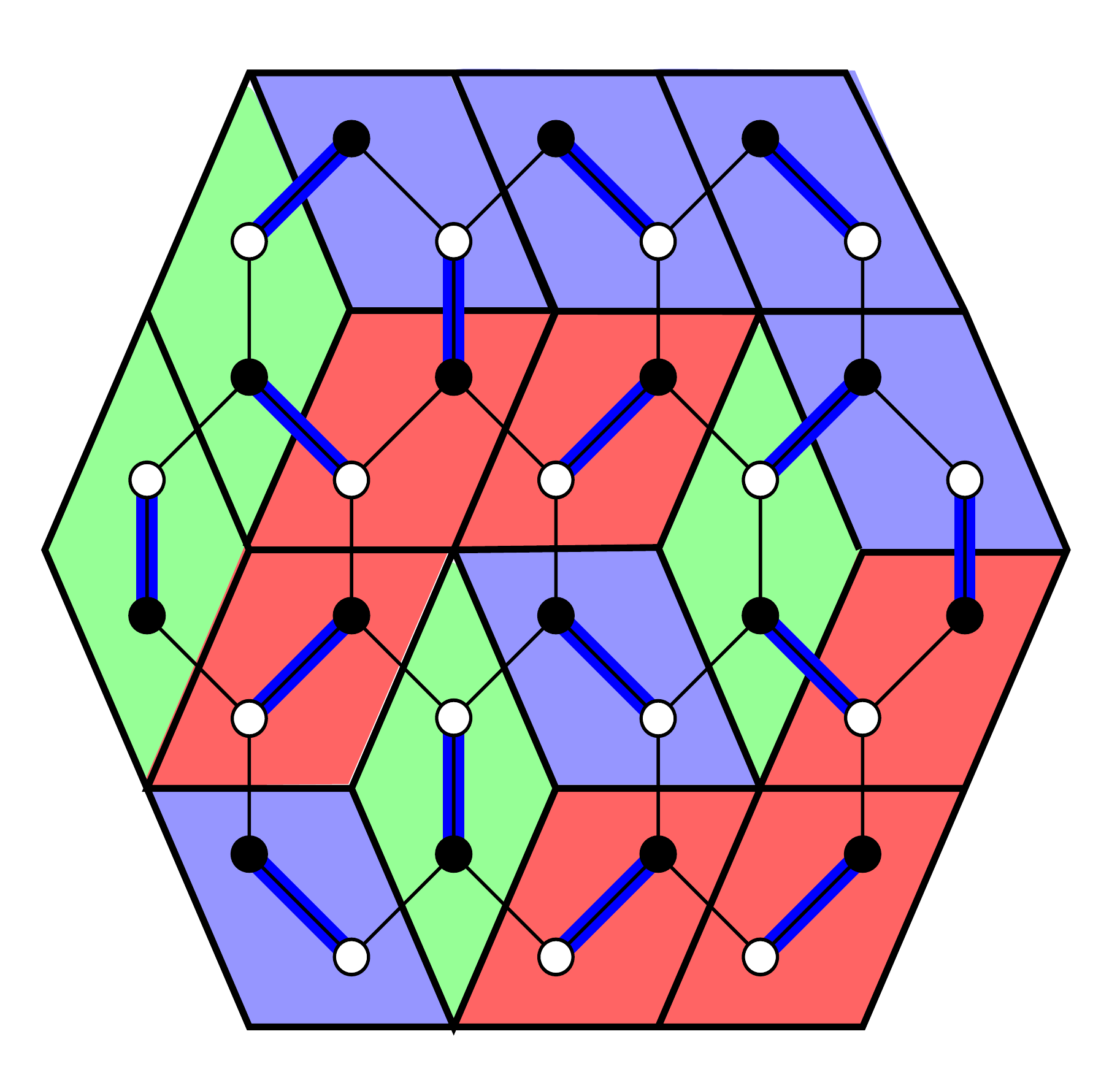}
\end{center}
\caption{Lozenge tiling of a domain. The colouring of the lozenges make the 
bijection with stepped surfaces apparent. The height function indicated is the 
$z$ coordinate of the corresponding point in $\Z^3$.}\label{fig:exemple_tiling}
\vspace{-1cm}
\end{wrapfigure}


%

When $D$ is bounded, we are interested in the uniform measure on lozenge tiling
of $D$. When $D$ is not bounded, one cannot simply take the uniform measure but 
the case $D = \C$ is well understood via the following theorem.
\begin{thm}\cite{SSthesis}\label{thm:uniqueness_dimers}
  For all $p_a,p_b,p_c$ in $(0,1)$ such that $p_a+p_b+p_c=1$, there
  exists a unique ergodic translation invariant Gibbs
  measure $\mu$ on lozenge tiling coverings of the plane such that
\begin{itemize}
\item Vertical (resp. north east- south west, north west-south east) lozenges appear with probability $p_a$ (resp. $p_b,p_c$).
  \item For any finite subgraph $G \subset \cH$, the measure on $G$,
    conditioned on the state of $\cH \setminus G$, is uniform over all
    possible dimer configurations in $G$.
  \end{itemize}
\end{thm}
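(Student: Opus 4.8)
The plan is to establish existence via the explicit determinantal (Kasteleyn) construction, and uniqueness via the variational principle for the surface tension; the uniqueness is the substantive part.

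\textbf{Existence.} First I would produce a candidate measure with the prescribed proportions using Kasteleyn theory. The lattice $\cH$ is doubly periodic with a fundamental domain containing one black and one white vertex and three edges, so after a Fourier transform on the torus the Kasteleyn operator becomes the scalar characteristic polynomial $P(z,w) = a + b z + c w$, for positive edge weights $a,b,c$ attached to the three orientation classes. Inverting the Kasteleyn operator expresses the coupling function as a double contour integral over $(z,w) \in (S^1)^2$, and Kenyon's theory then yields a translation-invariant determinantal measure $\mu_{a,b,c}$ on dimer configurations of $\cH$ whose single-edge probabilities — i.e.\ the three lozenge densities — are explicit integrals of the coupling function. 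These measures are Gibbs by construction: a determinantal measure arising from the inverse Kasteleyn operator satisfies the conditional uniformity property, since the weights are multiplicative and the partition function of any finite $G$ factorizes, so conditioning on the state of $\cH\setminus G$ leaves the uniform measure on the admissible completions. I would then show the map $(a,b,c)\mapsto(p_a,p_b,p_c)$, from positive weights modulo global scaling to the interior of the probability simplex, is a bijection; this says every slope interior to the Newton polygon (here a triangle) is realized, and follows from strict monotonicity of each density in the corresponding weight together with a boundary/degree argument. Ergodicity, indeed mixing, of $\mu_{a,b,c}$ under translations then follows from the decay of the correlation kernel, which makes tail events trivial.

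\textbf{Uniqueness.} This is the main obstacle. Suppose $\mu,\mu'$ are two ergodic translation-invariant Gibbs measures with the same proportions $(p_a,p_b,p_c)$, equivalently the same height-function slope $s$. The strategy is to show both coincide with the entropy maximizer at slope $s$. (i) I would define the specific entropy of a translation-invariant measure and recall, from the large-deviation analysis of tilings of Cohn--Kenyon--Propp, that the normalized logarithm of the number of tilings of a large region with boundary slope $s$ converges to $-\sigma(s)$, where the surface tension $\sigma$ is strictly convex. (ii) Using the conditional uniformity (Gibbs) property together with translation invariance and the Shannon--McMillan--Breiman theorem, I would show that the specific entropy of \emph{any} ergodic Gibbs measure of slope $s$ equals exactly $-\sigma(s)$, so that every such measure is an entropy maximizer at its slope. (iii) Finally, strict concavity of specific entropy in the slope closes the argument: an affine mixture of two distinct ergodic Gibbs measures of the same slope is translation-invariant of slope $s$ but would have entropy strictly exceeding the maximum unless the two measures agree, a contradiction, so $\mu=\mu'$.

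I expect step (ii) — upgrading the purely local Gibbs property to the global identity ``specific entropy $=-\sigma(s)$'' for every ergodic Gibbs measure — to be the delicate point, since it requires controlling boundary effects and showing that conditional uniformity alone saturates the variational bound. An alternative that avoids the entropy computation is Sheffield's cluster-swapping coupling: couple $T\sim\mu$ and $T'\sim\mu'$ in a translation-invariant way, examine the contours of the superposition $T\triangle T'$ (which are exactly the level lines of the height difference $h-h'$), note that the equal-slope hypothesis forces $h-h'$ to have zero mean gradient so the contours are almost surely finite, and then swap the two configurations inside finite contours using the Gibbs property; stationarity and ergodicity of the coupling then force $h-h'$ to be constant, i.e.\ $\mu=\mu'$. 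The subtle point on that route is ruling out infinite contours carrying a macroscopic height difference, which is again where equal slope and ergodicity must be combined with care.
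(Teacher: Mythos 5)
The paper does not actually prove this statement: it is quoted directly from Sheffield's thesis \cite{SSthesis}, so the only meaningful benchmark is Sheffield's own argument (existence essentially via the explicit constructions of \cite{KenyonOkounkovSheffield}, uniqueness by cluster swapping). Your existence step is correct in outline: the torus Kasteleyn computation with characteristic polynomial $P(z,w)=a+bz+cw$, the resulting translation-invariant determinantal Gibbs measures, the bijection between positive weights modulo scaling and slopes in the open simplex, and ergodicity from decay of the correlation kernel are all standard.

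Your primary uniqueness route, however, has a genuine gap at step (iii). Specific (per-site) entropy is an \emph{affine} functional on the simplex of translation-invariant measures: $\mathrm{Ent}\bigl(\lambda\mu + (1-\lambda)\mu'\bigr) = \lambda\,\mathrm{Ent}(\mu) + (1-\lambda)\,\mathrm{Ent}(\mu')$. Entropy is strictly concave only for finite systems; the strictness is lost in the thermodynamic limit. So if $\mu \neq \mu'$ are two ergodic Gibbs measures of the same slope $s$, both with specific entropy $-\sigma(s)$ (your step (ii), which is true but is itself the substance of the variational principle in \cite{SSthesis}), then every mixture also has specific entropy exactly $-\sigma(s)$: nothing ``strictly exceeds the maximum'' and no contradiction appears. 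Strict convexity of $\sigma$ separates measures of \emph{different} slopes, which is vacuous here since the two measures have equal slope by hypothesis. This is not a repairable technicality within the entropy framework --- it is precisely why Sheffield had to develop cluster swapping: the variational principle alone cannot distinguish two entropy maximizers at the same slope. Your fallback sketch (couple $\mu$ and $\mu'$, study the level lines of the height difference in the superposition, exclude infinite contours using equal slope and ergodicity, then swap inside finite contours) is the correct route and is essentially Sheffield's actual proof; but, as you acknowledge, ruling out infinite contours is the heart of that argument and is left unproved in your sketch, so the proposal as written does not establish uniqueness by either route.
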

The translation invariance implies directly that the expected height
change is linear. Correlations between tiles in these measures are known to be expressed in terms of determinants with an explicit (and quite simple) kernel (\cite{KenyonOkounkovSheffield}) but we will not use this fact here. 

\medskip
The height function with respect to a plane $\cP$ in $\R^3$ at some
vertex $x$ of the tiling is defined to be the distance of the
corresponding point of the surface to $\cP$ (up to a global scaling
factor). For example \cref{fig:exemple_tiling} shows the height function with 
respect to the horizontal plane $z = 0$ (using the standard coordinates in 
$\R^3$).

The above definition can be made more algebraic by giving the following equivalent definition. Let $\phi_{\text{ref}}$ be a function on oriented edges such that $\phi_{\text{ref}} (bw) = - \phi_{\text{ref}}(wb)$ for all edges $bw$ and such that 
\begin{align*}
 \forall w,  \,\,\sum_{b \sim w} \phi_{\text{ref}}(wb) &= 1 &  \forall
 b,  \,\, \sum_{w \sim b} \phi_{\text{ref}}(bw) &=-1 .
\end{align*}
To make statement about planarity simpler, we will also always assume
that $\phi_{\text{ref}}$ is periodic even though it is not strictly
required for this construction to hold. This function
$\phi_{\text{ref}}$ is called the reference flow and in the previous
geometric definition each reference plane corresponds to a particular
choice of $\phi_{\text{ref}}$.  Given a tiling, let $M$ be the set of
edges of $\cH$ that correspond to a lozenge (the blue edges in
\cref{fig:exemple_tiling}). We define a flow $\phi_M$ on oriented
edges by $\phi_M(wb) = \mathbbm{1}_{(wb) \in M}$ and $\phi_M(bw) =
-\mathbbm{ 1}_{(wb) \in M}$. We now define the dual flow $(\phi_M -
\phi_{\text{ref}})^\dagger$ on edges of the dual of $\cH$ (which is a
triangular lattice). The flow $(\phi_M -
\phi_{\text{ref}})^\dagger$ on an oriented 
dual edge $e^\dagger$ crossing an oriented edge $e$ in the primal (so that 
$e^\dagger$ is just a rotation of $e$ by $\pi/2$)  is the same as that in $e$. 
Since $(\phi_M - \phi_{\text{ref}})$ is divergence free by definition, we see 
that $(\phi_M - \phi_{\text{ref}})^\dagger$ is a gradient flow so it has a 
primitive $h_M$ and we define $h_M$ to be the height function of $M$ with 
reference flow $\phi_{\text{ref}}$. The global additive constant is fixed 
arbitrarily. A convenient choice when we work in a bounded domain is to set 
$h=0$ on some fixed boundary point.


It is easy to see that the height function determines the tiling
(assuming the reference is known). One can also check (it is quite
clear from \cref{fig:exemple_tiling}) that in a bounded domain $D$,
the height function $h_M$ along the boundary of $D$ is independent of
$M$ (as long as $M$ is a tiling of $D$). We can therefore talk of the
boundary height of $D$ without specifying a tiling inside. We say that a 
domain has \textbf{planar boundary with width $C$} if there exists a linear 
function $f$ such that $\abs{f-h} \leq C$ on $\partial D$.

\subsection{T-graph construction}\label{sub:construction}

%
In this section we construct the T-graphs and state some of its geometric 
properties that will be needed later.

\begin{wrapfigure}{R}{6.5cm}
\begin{center}
  \includegraphics[width=5cm]{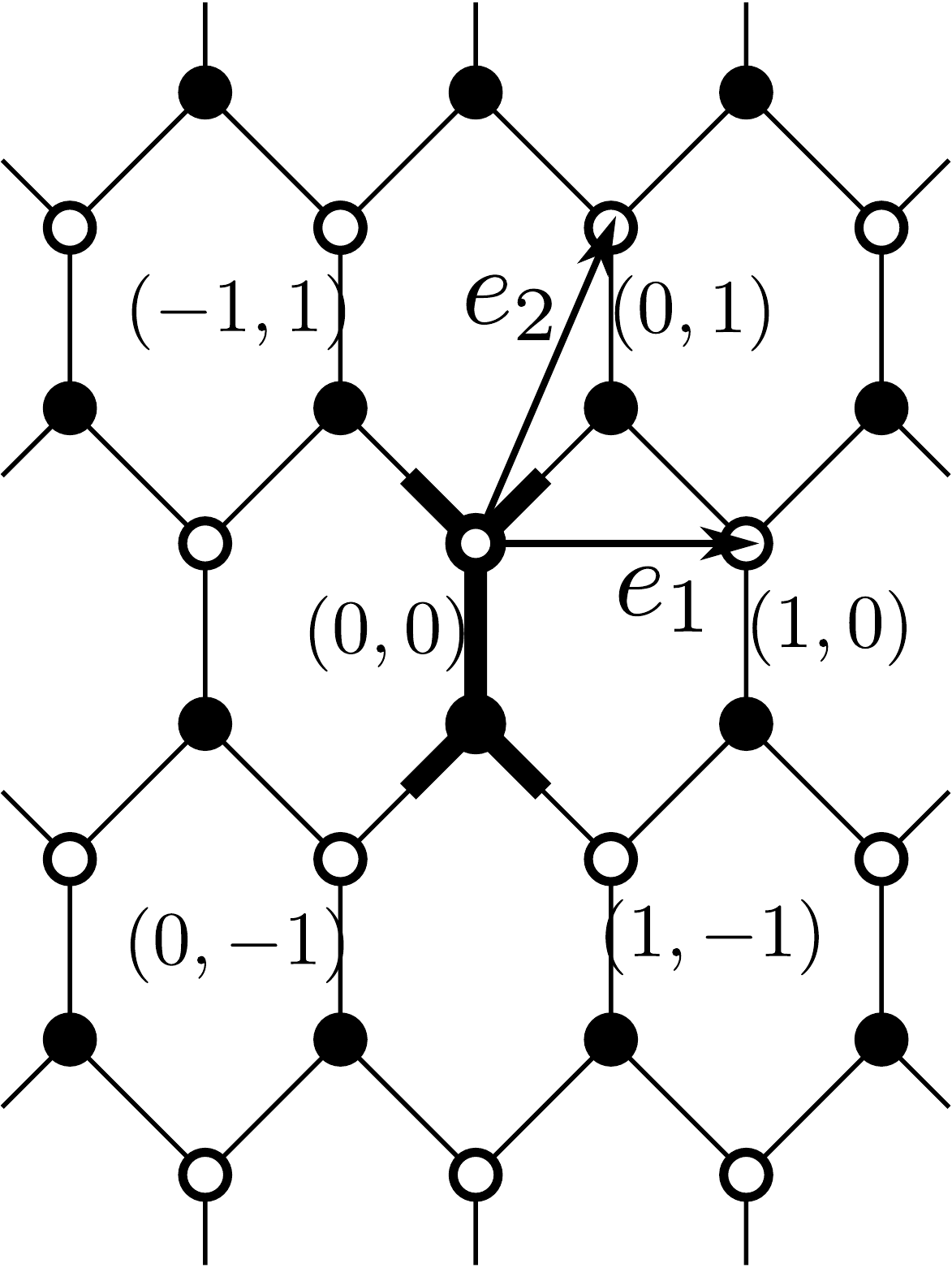}
\captionsetup{width=6cm}
\caption{An illustration of the coordinates we use on the hexagonal lattice. Near each vertical edge are indicated the (common) coordinates of its two endpoints.}
\label{fig:coordonees}
\end{center}
\vspace{-1cm}
\end{wrapfigure} 

We start by defining suitable coordinates on the infinite hexagonal
lattice $\cH$. 
We embed the hexagonal lattice $\cH$ in the plane in the way
  represented in Figure \ref{fig:coordonees}. The figure which is drawn with 
thicker lines in \cref{fig:coordonees} is
  called the fundamental domain and we write it as $\cH_1$.\ We let $e_1$ and 
$e_2$
  be the two vectors represented in \cref{fig:coordonees}. Given $v$ a vertex of $\cH$, we call
  coordinates of $v$ the unique $(m,n)\in \Z^2$ such that $v-me_1-ne_2 \in
  \cH_1$. Note that given $(m,n)$ there are exactly two vertices
  with coordinates $(m,n)$, the top one is a white vertex by convention and 
hence the
  bottom one is black. We will write $m(v)$ and $n(v)$ for the coordinates 
of the vertex
  $v$. We will also write $b(m,n)$ and $w(m,n)$ for the black and
  white vertices with coordinates $(m,n)$.
  
   We write $\cH^\dagger$ for the dual graph of $\cH$. This is a triangular 
lattice. Each of its faces contains a vertex of $\cH$ and a face is 
called black or white according to the colour of that vertex. Vertices of $\cH^\dagger$ 
can be associated to the point in the centre of a face of $\cH$.
For a vertex $v$ of
  $\cH^\dagger$ we let $(m(v),n(v))$ be the (common) coordinates of the two
  vertices of the face corresponding to $v$ located just to the right of $v$.
  
 T-graphs will be defined as the primitive of a specific gradient flow on $\cH^\dagger$ which we define now.
Let us fix $p_a, p_b, p_c$ in $(0,1)$ such that $p_a + p_b + p_c =1$. Let $\Delta$ be a triangle in the complex plane with angles $\pi p_a, \pi p_b, \pi p_c$. We write $A$, $B$, $C$ its vertices and $\alpha = \vec{BC}$, $\beta = \vec{CA}$, $\gamma = \vec{AB}$ seen as complex numbers. We have $\alpha + \beta + \gamma = 0$. Let $\lambda$ be a complex number of modulus one. We will require later $\lambda$ to be outside a set of Lebesgue measure $0$ and all references to almost every $\lambda$ are with respect to the Lebesgue measure on the circle.
  
  We can now define the following flow on oriented edges between a white vertex $w$ and a black vertex $b$:
  \[
  \phi(w b) = \Re\left( \lambda^{-1} (\tfrac{\beta}{\gamma})^{-m(w)} (\tfrac{\beta}{\alpha})^{-n(w)}  \right) \alpha \lambda (\tfrac{\beta}{\gamma})^{m(b)} (\tfrac{\beta}{\alpha})^{n(b)}, 
  \]
and $\phi(b w) = - \phi(w b)$. We then define the dual flow $\phi^\dagger$ by 
rotating $\phi$ anticlockwise by $\frac \pi 2$, i.e crossing the edge $wb$ with 
the white vertex on the left gives flow $+ \phi(w b)$.

One can check from the formula that the circulation of the dual flow $\phi^\dagger$ around
any triangle (sum of the flow along the edges in a triangular face) is $0$ so $\phi^\dagger$ is a gradient flow. Hence there
exists a function $\psi$ on the vertices of the triangular lattice $\cH^\dagger$ (we write $\psi_{\lambda \Delta}$ if we want
to emphasize the dependence), unique up to an additive constant such
that for all adjacent vertices of the triangular lattice $v, v' \in \cH^\dagger$,  $\psi(v')-\psi(v) =
\phi^\dagger(vv')$. We fix the additive constant by specifying $\psi$
to be $0$ on the dual vertex just to the left of the fundamental domain 
$\cH_1$. We can extend $\psi$ affinely to the edges of
$\cH^\dagger$ so that $\psi$ maps $\cH^\dagger$ to a connected union
of segments in $\C$ and we write $T_{\lambda \Delta} = \psi_{\lambda
  \Delta}(\cH^\dagger)$ (see \cref{fig:image_locale} for an
example). We call the image of $\cH^\dagger$ under $\psi$ the
\textbf{T-graph} with parameters $\Delta$ and $\lambda$.




\begin{figure}
\centering
\includegraphics[width=.3 \textwidth]{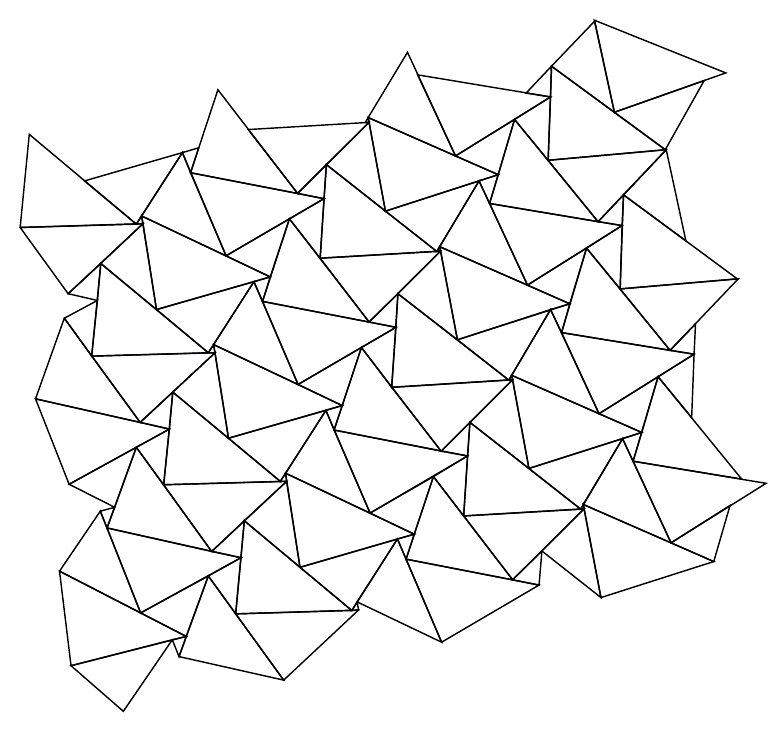}\quad \quad 
  \includegraphics[width = 0.5 \textwidth]{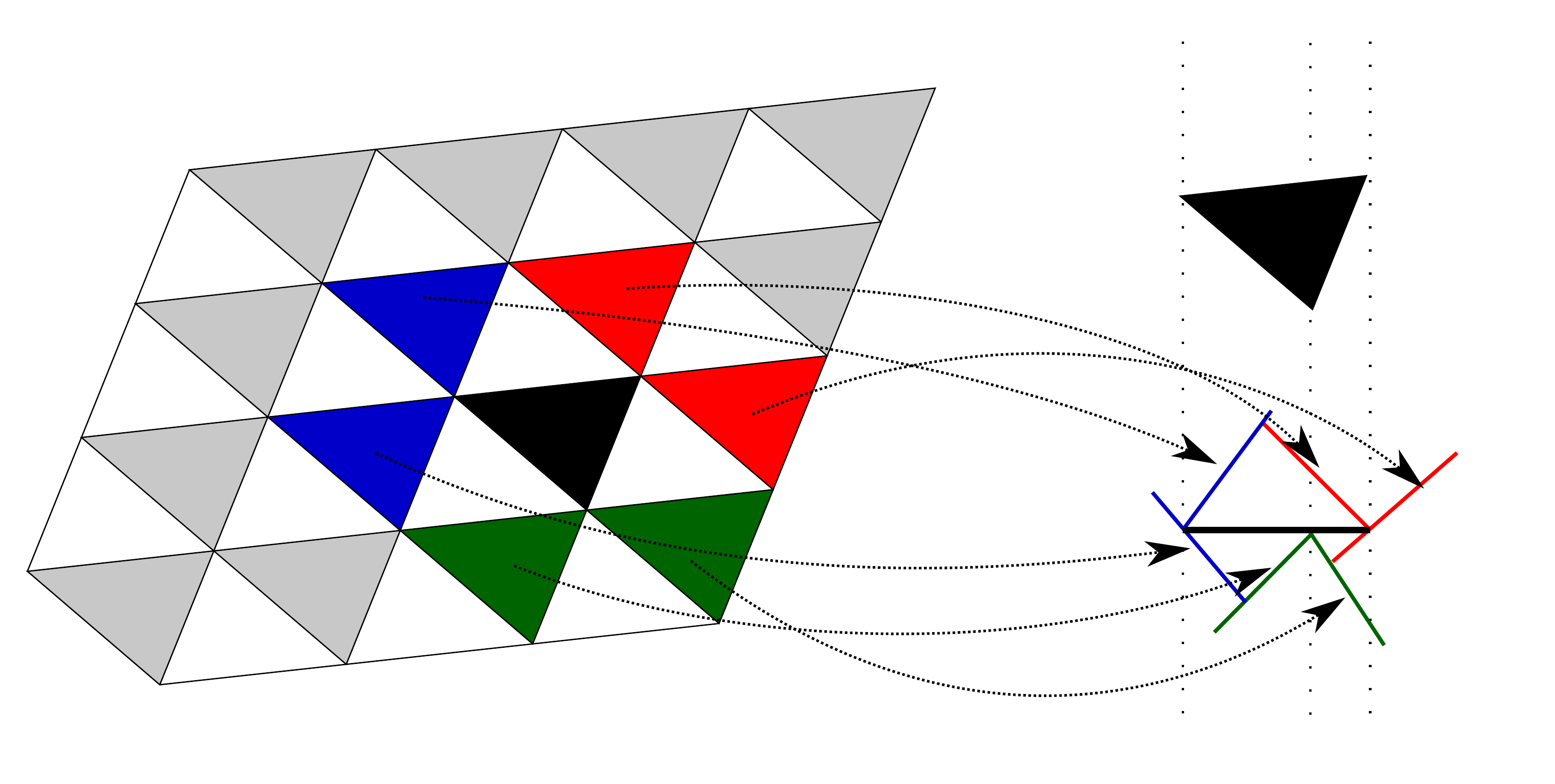} 

\caption{Left: A part of a T-graph. Right: A schematic view of the image of a black face and its 
neighbours in $\cH^\dagger$ (for clarity black faces are colored differently). Each black face is projected into a segment. The vertex of the triangle which gets projected in the interior of the segment becomes a vertex of the T-graph where two other segments meet.}
\label{fig:image_locale}
\end{figure}


\begin{remark}
One can possibly get some intuition about the construction by comparing with what we get if we omit the real part in the definition of $\phi$ above.\ In that case all faces would have been mapped to similar triangles and we would actually have defined simply the embedding of the triangular lattice with triangles of angles $\pi p_a$, $\pi p_b$, $\pi p_c$. The T-graph can be seen as a perturbation of that embedding where triangles corresponding to black faces are flattened into segments but that otherwise preserves the adjacency structure. We will also see in \cref{prop:translation} point (i) that at large scale the two embedding are close\footnote{up to global factor $\frac 1 2 $}.
\end{remark}

\section{Properties of T-graphs}
 
\subsection{Geometric properties} 
 
The following proposition tells us how the value of $\lambda$ behave under translation.

\begin{prop}\label{prop:translation}
  Let $T_{\lambda\Delta}$ be the graph constructed above (recall that we
  choose $\psi=0$ on the dual vertex just to the left of the 
fundamental
  domain $\cH_1$). Let $v $ be 
the
 vertex of $\cH^\dagger$ with coordinates $(m,n)$ and let
  $T'$ be the graph constructed in the same way but taking
  $\psi(v)=0$. Then we have $T' = T_{\lambda'\Delta}$ with $\lambda'=
  \lambda (\frac{\beta}{\gamma})^{m} (\frac{\beta}{\alpha})^{n}$. In particular, if we pick $\lambda$ to be uniform in the unit circle, then $T_{\lambda\Delta}$ becomes translation invariant.
\end{prop}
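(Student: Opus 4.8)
The plan is to reduce everything to a single covariance identity for the defining flow $\phi$, and then read off both the equality $T' = T_{\lambda'\Delta}$ and the translation invariance as formal consequences. Write $\tau$ for the translation of $\cH$ by the lattice vector $m e_1 + n e_2$; it is a colour-preserving automorphism acting on coordinates by $(m',n')\mapsto (m'+m, n'+n)$, and the induced map on $\cH^\dagger$ sends the reference dual vertex $o$ (coordinates $(0,0)$, just left of $\cH_1$) to $v$. For brevity set $\rho = \tfrac{\beta}{\gamma}$, $\sigma = \tfrac{\beta}{\alpha}$, and $\lambda' = \lambda \rho^{m}\sigma^{n}$ as in the statement, and write $\phi_\lambda$ to emphasize the dependence of the flow on $\lambda$. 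The only genuine computation is the check that
\[
\phi_\lambda(\tau(w)\tau(b)) = \phi_{\lambda'}(wb)
\]
for every white--black edge $wb$. Substituting $m(\tau(w)) = m(w)+m$, $n(\tau(w)) = n(w)+n$ and likewise for $b$ into the formula for $\phi$, the shift contributes factors $\rho^{\pm m}\sigma^{\pm n}$ that regroup exactly as a replacement of $\lambda$ by $\lambda'$: inside $\Re(\cdots)$ the term $\lambda^{-1}$ absorbs $\rho^{-m}\sigma^{-n}$, and the outer factor $\lambda$ absorbs $\rho^{m}\sigma^{n}$.

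Since $\phi^\dagger$ is obtained from $\phi$ by the same fixed rotation by $\tfrac\pi2$ on every edge, and $\tau$ commutes with this rotation, the identity passes verbatim to the dual flow: $\phi^\dagger_\lambda(\tau(u)\tau(u')) = \phi^\dagger_{\lambda'}(uu')$ for adjacent dual vertices $u, u'$. Consequently the function $g := \psi_{\lambda\Delta}\circ \tau$ satisfies $g(u')-g(u) = \phi^\dagger_{\lambda'}(uu')$, so $g$ is a primitive of $\phi^\dagger_{\lambda'}$. By uniqueness of the primitive up to an additive constant we get $\psi_{\lambda\Delta}\circ\tau = \psi_{\lambda'\Delta} + C$; evaluating at $o$ and using $\tau(o)=v$ together with $\psi_{\lambda'\Delta}(o)=0$ identifies $C = \psi_{\lambda\Delta}(v)$.

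Now $T'$ is by construction the image of $\cH^\dagger$ under $\psi_{\lambda\Delta} - \psi_{\lambda\Delta}(v)$ (the same flow, re-normalised to vanish at $v$). Using $\tau(\cH^\dagger) = \cH^\dagger$ and the relation just obtained,
\[
T_{\lambda'\Delta} = \psi_{\lambda'\Delta}(\cH^\dagger) = \bigl(\psi_{\lambda\Delta}\circ\tau - \psi_{\lambda\Delta}(v)\bigr)(\cH^\dagger) = \bigl(\psi_{\lambda\Delta} - \psi_{\lambda\Delta}(v)\bigr)(\cH^\dagger) = T',
\]
which is the asserted identity.

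For the final sentence, the point I expect to need the most care is that $\lambda' = \lambda\rho^{m}\sigma^{n}$ generically leaves the unit circle, since $|\rho| = |\beta/\gamma|$ is a ratio of two side lengths of $\Delta$ and hence is $\neq 1$ for a non-isosceles triangle. This is resolved by observing that $\phi_\lambda$, and therefore the whole T-graph, is invariant under $\lambda \mapsto t\lambda$ for real $t>0$: the factor $t^{-1}$ pulled out of $\Re(\cdots)$ cancels the outer factor $t$, so $\phi_{t\lambda} = \phi_\lambda$. Thus $T_{\lambda\Delta}$ depends on $\lambda$ only through $\arg\lambda$, and the map $\lambda\mapsto\lambda'$ descends to the rotation $\arg\lambda\mapsto \arg\lambda + m\arg\rho + n\arg\sigma$ of the circle. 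Rotations preserve the uniform measure, so picking $\lambda$ uniform on the unit circle makes the law of $T_{\lambda'\Delta}$ — equivalently that of the lattice translate $T'$ of $T_{\lambda\Delta}$ — equal to that of $T_{\lambda\Delta}$, which is exactly translation invariance.
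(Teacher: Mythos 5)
Your proof is correct. Note that the paper states \cref{prop:translation} with no proof at all (the text moves directly on to \cref{prop:prop_geometrique}, which is quoted from \cite{LaslierCLT}), so there is no argument of the paper to compare yours against; what you have written supplies the missing verification. Your route is the natural one: the covariance identity $\phi_\lambda(\tau(w)\tau(b)) = \phi_{\lambda'}(wb)$ is an immediate substitution in the defining formula, it passes to the dual flow because translation commutes with the $\frac{\pi}{2}$ rotation, and it then transfers to $\psi$ by uniqueness of the primitive up to an additive constant, the constant being pinned down by the normalisation at the base dual vertex, so that $T_{\lambda'\Delta}$ and $T'$ coincide as subsets of $\C$. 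The genuinely delicate point --- which the statement of the proposition itself glosses over --- is that $\lambda' = \lambda(\beta/\gamma)^m(\beta/\alpha)^n$ generically has $\abs{\lambda'} \neq 1$, whereas the paper only introduces $T_{\lambda\Delta}$ for $\abs{\lambda} = 1$; in particular one cannot argue that $\lambda'$ is uniform on the circle when $\lambda$ is. Your resolution is exactly right: $\phi_{t\lambda} = \phi_\lambda$ for real $t > 0$ (the factor $t^{-1}$ coming out of $\Re(\cdot)$ cancels the outer factor $t$), so the T-graph depends on $\lambda$ only through $\arg \lambda$, the map $\lambda \mapsto \lambda'$ descends to a rotation of the circle, and rotation invariance of the uniform law gives translation invariance of $T_{\lambda\Delta}$ in distribution. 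That observation is precisely what makes the final sentence of the proposition meaningful, and it deserves to be spelled out as you have done.
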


Here are some other geometric facts about $T$, see for instance \cite{LaslierCLT} :
\begin{prop}\label{prop:prop_geometrique} 
  $T$ has the following properties (see Figure \ref{fig:image_locale}):
\begin{enumerate}[{(}i{)} ]
\item \label{prop:prop_geometrique:lineaire} Let $\ell(m,n)= \frac{\alpha}{2}m - \frac{ \gamma}{2} n$, then $\psi(v) - \ell(m(v),n(v))$ is bounded. Furthermore $\ell$ is invertible so any point $z \in \C$ is at bounded distance from $T$.
\item \label{prop:prop_geometrique:face_noire} The image of any black
  face of $\cH^\dagger$ is a segment.
\item \label{prop:prop_geometrique:face_blanche} The image of any
  white face is a contraction and rotation of $\Delta$. In particular, the map 
preserves orientation.

\item \label{prop:prop_geometrique:non_degenere} The length of
  segments are bounded above and below uniformly in
  $\lambda$ and for almost every $\lambda$ no triangle is degenerate to a point.
\item \label{prop:prop_geometrique:endpoints} For any $\lambda$ (resp. almost every $\lambda$),
    for any vertex $v$ of $\cH^\dagger$, $\psi(v)$ belongs to at least (resp. exactly) three
    segments. Generically any vertex is an endpoint of two segments
    and is in the interior of
    the third one. All endpoints of segments are of the above form
    $\psi(v)$ with $v$ a vertex of $\cH^\dagger$. We call these points \textbf{vertices of the T-graph}.
 \item \label{prop:prop_geometrique:non_recouvrement} The triangular images of
white faces
    cover the plane and do not intersect, that is, any $z \in \C$ not in a 
    segment belongs to a unique face of the T-graph.
  \item \label{prop:prop_geometrique:non_intersection} If two segments intersect, the intersection point is an endpoint of at least one of the segments.
 \end{enumerate}
\end{prop}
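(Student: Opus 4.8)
The plan is to reduce everything to a single algebraic feature of the definition of $\phi$ and then bootstrap the global statements by a degree argument. For a white vertex $w$ and a black vertex $b$ write
\[
Z(w) = \lambda^{-1}\Bigl(\tfrac\beta\gamma\Bigr)^{-m(w)}\Bigl(\tfrac\beta\alpha\Bigr)^{-n(w)},
\qquad
D(b) = \alpha\lambda\Bigl(\tfrac\beta\gamma\Bigr)^{m(b)}\Bigl(\tfrac\beta\alpha\Bigr)^{n(b)},
\]
so that the defining formula reads simply $\phi(wb)=\Re\bigl(Z(w)\bigr)\,D(b)$, and $\phi^\dagger=i\phi$ up to the orientation convention. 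The crucial point is that the real factor $\Re(Z(w))$ depends only on the white endpoint while the direction $D(b)$ depends only on the black endpoint. From this I would get (ii) and (iii) immediately: around a black face with black vertex $b$ all three bounding edges share the direction $D(b)$, so their $\phi^\dagger$-vectors are real multiples of the single number $iD(b)$ and, summing to zero by the circulation relation, place the three vertex images on a common line, giving (ii); around a white face with white vertex $w$ all three edges share the real factor $\Re(Z(w))$, and checking the coordinate shifts of the three neighbours shows $\{D(b_1),D(b_2),D(b_3)\}$ is a common complex multiple of $\{\alpha,\beta,\gamma\}$, so (since $\alpha+\beta+\gamma=0$) the image is a triangle directly similar to $\Delta$, which is (iii) including orientation preservation once the cyclic order of the neighbours is fixed.

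For the size control (iv), the same identities show that the white face at $w$ is $\Delta$ scaled by $|\alpha/\beta|\,\bigl|\cos(\arg Z(w))\bigr|$ (the moduli of the exponential factors cancel against those hidden inside $Z(w)$, leaving only a phase), and the segment at $b$ has length equal to $|\alpha|$ times a fixed continuous function of $\arg Z$. Both are bounded above uniformly in $\lambda$; the segment length is bounded below because the three real numbers $\Re(\xi),\Re(\xi\gamma/\beta),\Re(\xi\alpha/\beta)$ cannot vanish simultaneously for $\xi\neq0$ (that would force $\gamma/\beta\in\R$, impossible as the sides of $\Delta$ are pairwise non-parallel), so their spread is a strictly positive continuous function on the compact circle of phases. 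A white triangle degenerates precisely when $\cos(\arg Z(w))=0$, which for each fixed $w$ rules out a measure-zero set of $\arg\lambda$; the countable union over $w$ is still null, giving the almost-every-$\lambda$ statement.

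I would establish the linear part (i) by summing $\phi^\dagger$ along a staircase coordinate path from the base vertex. Writing $\Re(Z)=\tfrac12(Z+\bar Z)$ and using $|\lambda|=1$, the contribution of the $Z$-term telescopes into the constant drift that produces $\ell(m,n)=\tfrac\alpha2 m-\tfrac\gamma2 n$, while the $\bar Z$-term has constant modulus and a phase of the form $\lambda^{2}e^{2im\arg(\beta/\gamma)}e^{2in\arg(\beta/\alpha)}$; its partial sums are geometric series of unit-modulus ratio but ratio $\neq1$ (again by non-parallelism, since all angles lie in $(0,\pi)$), hence uniformly bounded. This gives $\psi(v)-\ell(m(v),n(v))=O(1)$; invertibility of $\ell$ follows from $\alpha,\gamma$ being $\R$-linearly independent, and combining the boundedness with the uniform segment-length bound of (iv) shows every $z\in\C$ lies within bounded distance of $T$.

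Finally, (vi) is the crux, and I would prove it by a degree argument and then read off (v) and (vii). Extend $\psi$ to the piecewise-affine map $\Psi$ on $\cH^\dagger$; by (i) it is a bounded perturbation of the linear isomorphism $\ell$, hence proper, and the straight-line homotopy to $\ell$ stays proper, so $\Psi$ has degree $+1$ (consistent with the orientation in (iii)). For any $q\in\C$ off all segments the only preimages lie in interiors of white faces, each counted $+1$ by (iii), so the signed count being the degree forces exactly one white triangle to contain $q$; as this holds for every such $q$, the white triangles cover the plane with disjoint interiors, which is (vi). From the tiling, the three black faces meeting at a vertex $v$ produce three segments through $\psi(v)$ (at least three for every $\lambda$, exactly three and in the generic endpoint/endpoint/interior configuration for almost every $\lambda$ by (iv)), giving (v), while two segments crossing at a common interior point would force two white triangles to overlap there, contradicting (vi) and giving (vii). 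The main obstacle is precisely this degree step: $\Psi$ is not a local homeomorphism along the segments, where black faces are folded, so the degree must be set up carefully for the proper map via the homotopy to $\ell$ and by counting preimages only over the open set of regular values off the segments, rather than quoting a naive local-degree statement. The rest — fixing the exact adjacency pattern of the coordinates, the signs in the circulation relations, and the trigonometric bounds of (iv) — is routine bookkeeping.
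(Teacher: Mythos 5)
The paper itself offers no proof of this proposition: it is quoted with a pointer to \cite{LaslierCLT}. So your argument can only be compared with the standard proofs in the literature, and in its main lines it agrees with them. The factorization $\phi(wb)=\Re(Z(w))D(b)$, with a real factor depending only on the white endpoint and a direction depending only on the black endpoint, is exactly what yields (ii) and (iii); the cancellation $|Z(w)|\,|D(b)|=O(1)$ for adjacent vertices, together with the observation that degeneracy of a given white face excludes only a null set of $\lambda$, gives (iv); the splitting $\Re Z=\tfrac12(Z+\bar Z)$ into a telescoping drift (producing $\ell$) plus an exactly geometric sum with unit-modulus ratio $\neq 1$ gives (i); and the proper-homotopy/degree argument for (vi) is correct and robust. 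Two points in its favour: counting preimages only in open white faces is legitimate because every edge of $\cH^\dagger$ borders a black face (bipartiteness of $\cH$), so its image lies inside a segment; and you do not even need to determine the sign of $\det\ell$, since all local degrees carry a common sign by (iii) and the total degree is $\pm1$, which already forces exactly one preimage. Item (vii) then follows from (vi) as you say.

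The one genuine gap is in (v). You assert that ``exactly three segments through $\psi(v)$ for almost every $\lambda$'' follows ``by (iv)'', but non-degeneracy of the triangles does not imply this: (iv) rules out collapsed faces, not accidental incidences, i.e.\ a segment $\psi(b')$ with $b'$ not incident to $v$ happening to pass through the point $\psi(v)$. This is a separate genericity statement requiring its own measure-zero argument. Your $Z/\bar Z$ decomposition shows $\psi_\lambda(u)=A_u+\lambda^2B_u$ with $A_u$ within bounded distance of $\ell(u)$ and $B_u$ uniformly bounded; hence only finitely many black faces $b'$ are candidates for incidence with a given $v$, and for each such pair the incidence condition is a trigonometric-polynomial equation in $\arg\lambda$, which excludes a null set of $\lambda$ \emph{provided} it is not identically satisfied. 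That non-triviality check is the missing step, and without it the inference from (iv) is simply false as stated. The neighbouring claim that generically $\psi(v)$ is an endpoint of two of its three segments and interior to the third also deserves an argument (it follows from (vi) together with the fact that the three white faces at $v$ contribute total angle $\pi p_a+\pi p_b+\pi p_c=\pi$ at $\psi(v)$, once exactly-three is known), rather than being folded into ``by (iv)''. Everything else is correct modulo the coordinate bookkeeping you explicitly flagged.
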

\textbf{From now on we assume that $\lambda$ is chosen so that no triangle is
degenerate and all vertices belong to exactly three segments.} We say
that the corresponding T-graph is \textbf{non-degenerate}.


%
%
We can also naturally define \textbf{edges of the T-graph} to be the portion of the segments joining two vertices and faces to be the connected components of the complement of the segments.
From the \cref{prop:prop_geometrique}, we see the following correspondences. 
\begin{center}
\begin{tabular}{|c|c|c|}
 \hline 
 T-graph & Hexagonal lattice & Triangular lattice \\ 
 \hline 
 Segment & Black vertex & Black face \\ 
 \hline 
 Face & White vertex & White face \\ 
 \hline 
 Vertex & Face & Vertex \\ 
 \hline 
 Edge & & Edge\\
 \hline
 \end{tabular}
 \end{center}
 
Note that the first three correspondences are really bijective while in the last case, some edges of $\cH^\dagger$ do not correspond to any edge of $T$. 
 With an abuse of notation, we will allow ourself to write $\psi(w)$ and $\psi(b)$ for the corresponding triangle or segment. We will also write $\psi^{-1}$ for the inverse of this one to one correspondence. 

Thanks to the above abuse of notation, $\psi$ describes an embedding of $\cH$ in the plane:

\begin{corollary}\label{prop:planarity1}
The following embedding of $\cH$ in the plane is proper, i.e. has non crossing 
edges: each white vertex $w$ is placed in the centre of mass of the triangle 
$\psi(w)$, each black vertex $b$ is placed at the unique vertex of $T$ in the 
interior of $\psi(b)$. Draw edges between a white vertex $w$ and a black vertex $b$ if the segment corresponding to $b$ share more than one point with the face corresponding to $w$ in the T-graph.
\end{corollary}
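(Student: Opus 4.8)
The plan is to establish that the embedding described is proper by using the correspondences recorded in Proposition~\ref{prop:prop_geometrique} together with the non-intersection and non-overlapping properties listed there. First I would unpack what the claimed embedding asserts: each white vertex $w$ sits at the centre of mass of the triangular face $\psi(w)$, each black vertex $b$ sits at the unique T-graph vertex lying in the interior of the segment $\psi(b)$ (this vertex exists and is unique by point~\ref{prop:prop_geometrique:endpoints}, since for almost every $\lambda$ each vertex belongs to exactly three segments, lying in the interior of one and being an endpoint of the other two), and an edge is drawn between $w$ and $b$ precisely when the segment $\psi(b)$ and the triangle $\psi(w)$ share more than one point in the T-graph. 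The goal is to show no two such drawn edges cross except at a common endpoint.

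Next I would localise the argument. The key geometric input is that, by point~\ref{prop:prop_geometrique:non_recouvrement}, the triangular images of white faces tile the plane without overlapping, and by point~\ref{prop:prop_geometrique:non_intersection} two segments can only meet at an endpoint of one of them. I would argue that an edge from $w$ to $b$ can be drawn entirely inside the closure of the triangle $\psi(w)$: the condition that $\psi(b)$ shares more than one point with $\psi(w)$ means that a sub-segment of $\psi(b)$ forms (part of) a side of the triangle $\psi(w)$, and the black vertex sits on the boundary of $\psi(w)$ at the corresponding interior-point location. Thus the edge $wb$ can be realised as a straight segment from the centre of mass of $\psi(w)$ to a boundary point of $\psi(w)$, hence lying in the interior of $\psi(w)$ except at its black endpoint. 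Since distinct white triangles have disjoint interiors, two edges incident to \emph{different} white vertices live in essentially disjoint regions and can only meet on the shared boundary, i.e.\ at a common black endpoint.

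Then I would handle the two remaining cases. For two edges sharing the same white vertex $w$, both lie inside the single triangle $\psi(w)$ as straight segments emanating from its centre of mass to distinct boundary points, so they meet only at $w$; here I would note that a triangle has three sides and the centre of mass sees each side without the connecting segments crossing. For two edges sharing the same black vertex $b$, both terminate at the single T-graph vertex in the interior of $\psi(b)$ but start from centres of mass of distinct white triangles; since those triangles are interior-disjoint and the two edges lie in their respective triangles, the edges can only meet at the shared point $b$. The main obstacle I anticipate is the careful verification that the edge can indeed be drawn inside the triangle without exiting it and without spuriously intersecting a segment belonging to a \emph{third} face: this requires combining point~\ref{prop:prop_geometrique:non_intersection} (segments meet only at endpoints) with the fact that the black vertex attached to $w$ lies on the boundary side of $\psi(w)$, so that no portion of the connecting segment crosses an unrelated segment of $T$. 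Once this local picture is pinned down, properness follows from the global tiling property in point~\ref{prop:prop_geometrique:non_recouvrement}.
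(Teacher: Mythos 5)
Your proposal is correct and takes essentially the same route as the paper's proof: both confine each edge $(wb)$ to the closed triangle $\psi(w)$, use the fact that distinct white triangles have disjoint interiors to rule out crossings between edges at different white vertices, and handle edges sharing a white vertex as straight segments from the centroid to distinct boundary points of $\psi(w)$. The paper additionally makes explicit the small converse point that a segment corresponding to a black vertex \emph{not} adjacent to $w$ in $\cH$ meets $\psi(w)$ in at most one point (so the drawn graph really is $\cH$), which your write-up treats only implicitly; otherwise the two arguments coincide.
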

\begin{proof}
Clearly a midpoint of a segment can be identified with a black face of the triangular lattice. We now simply observe that around each such vertex we drawn a hexagon:  

It is clear that we described an embedding of $\cH$ so we only have to check 
that edges are non crossing. For this, note that any edge $(bw)$ lies inside the 
triangle $\psi(w)$. Since triangles do not intersect, no pair of edges $(bw)$, 
$(b'w')$ with $w \ne w'$ intersects. Now for any $w$, its three adjacent edges 
are straight lines towards different points of $\psi(w)$ so they do not 
intersect. Further, the segment corresponding to a black vertex which is not a neighbour of the white vertex $w$ in the hexagonal lattice share at most one point with the face of $w$. The proof is now complete.
\end{proof}

\begin{figure}[h]
\begin{center}
\includegraphics[width = .3 \textwidth]{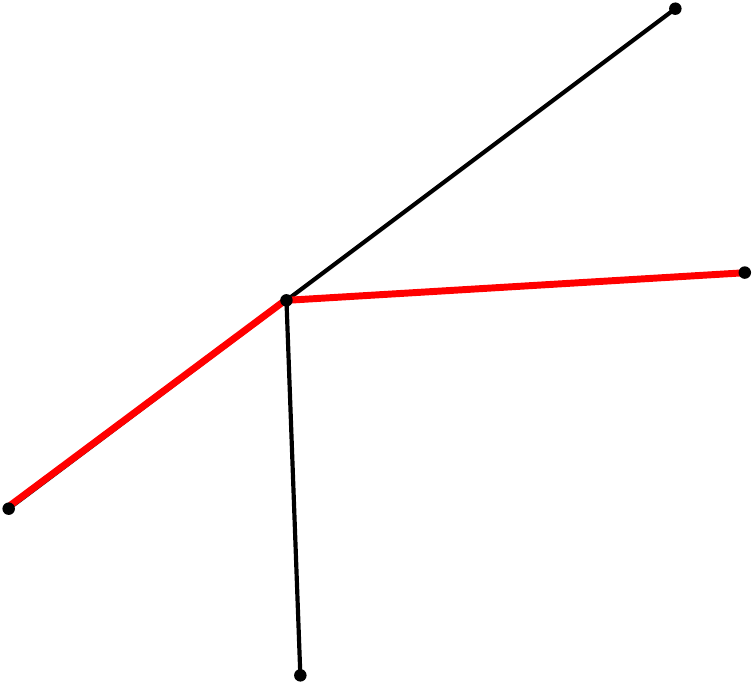} \includegraphics[width=.3\textwidth]{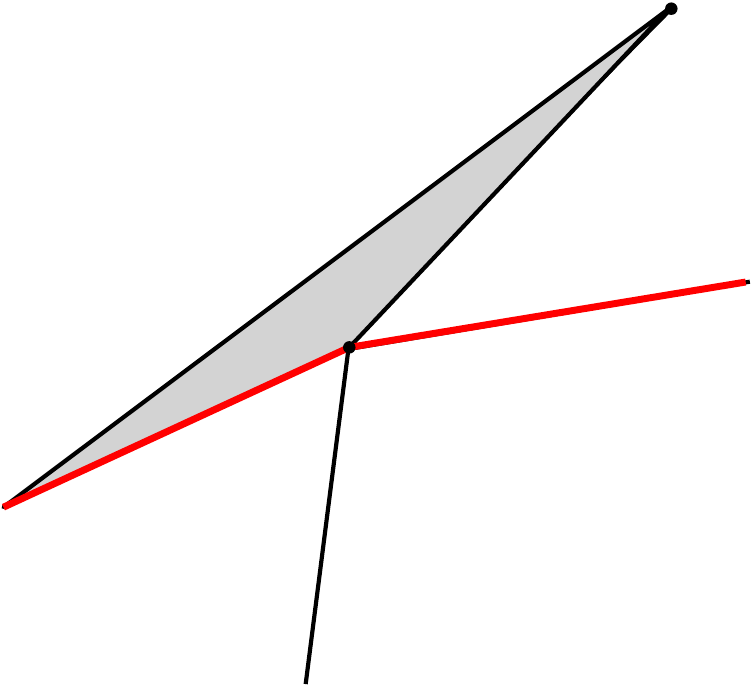}
\end{center}
\caption{Thickening of an edge into a black triangle, and effect on loop.}
\end{figure}

\begin{figure}[h]
\begin{center}
\includegraphics[width=.3\textwidth]{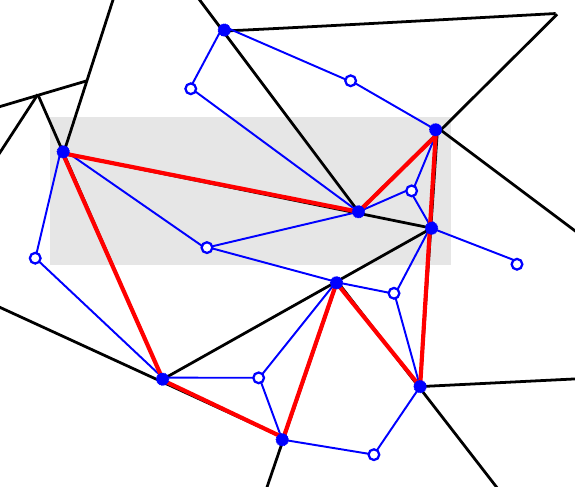} \hspace{.1\textwidth}
\includegraphics[width=.3\textwidth]{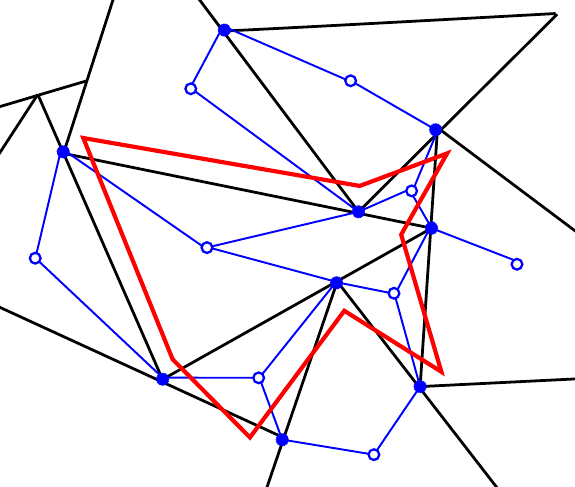} \\ \vspace{1cm}
\includegraphics[width=.3\textwidth]{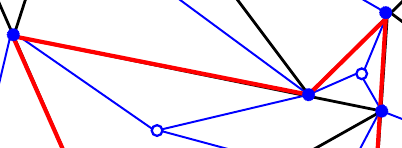}\hspace{.1\textwidth} 
\includegraphics[width=.3\textwidth]{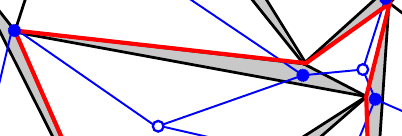} 
\end{center}
\caption{Top-Left: A portion of a T-graph with a loop $C$ in red along with the 
embedding 
of the hexagonal lattice determined by the T-graph as in 
\cref{prop:planarity1}. Top-Right: To visualise the path $\psi^
{-1}(C)$, one can shift every vertex of $C$ in the direction where $3$ 
triangles meet at the vertex (into the middle triangle, to be more precise). 
This path is drawn and can be readily seen as a dual path of the hexagonal 
lattice.
Bottom-Left :  The greyed area from the 
top-left figure zoomed in. Bottom-Right : Another way to visualise 
$\psi^{-1}(C)$: a perturbation of the T-graph where black faces are sent to 
thin triangles. The red path on that perturbation clearly coincide with the one 
in the top-left part.}
\label{fig:embedding}
\end{figure}

Finally one can associate sub-domains of a T-graph to sub domains of $\cH$ even though it require some care on the boundary.
\begin{prop}\label{prop:planarity2}
Let $C$ be a simple (unoriented) loop on a T-graph $T$, then $\psi^{-1}(C)$ is 
a simple close curve on $\cH^\dagger$ (each interior point actually 
has two pre-images, we use the one lying on the edge $e \in \cH^\dagger$ where $e$ is the edge which is
not mapped to the whole segment). Let $U$ be the subgraph of $\cH$ inside 
$\psi^{-1}(C)$ and let $D$ be the open domain strictly inside $C$. We have
\begin{itemize}
\item The white vertices of $U$ are exactly the $w$ such that $\psi(w) \subset \bar D$.
\item Any segment of $T$ which is strictly inside $C$ except possibly at finitely many points corresponds to a black vertex of $U$. All the neighbours of such a vertex are in $U$.
\item For a segment $S$ of $T$ with an infinite intersection with $C$, consider the side of $S$ where it has a single adjacent triangle. The black vertex $\psi^{-1}(S)$ is in $U$ if and only if that side of $S$ is inside $C$. These vertices have at least one neighbour in $U$ and one outside $U$.
\end{itemize}
\end{prop}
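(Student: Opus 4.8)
The plan is to translate the loop $C$ into a polygonal curve on $\cH^\dagger$, and then transport the Jordan structure back and forth using the ``fattened'' embedding of \cref{fig:embedding} (bottom right). First I would record the exact relation between edges. By the correspondences following \cref{prop:prop_geometrique}, every segment $S=\psi(b)$ carries exactly three vertices of $T$ — its two endpoints and one interior vertex — coming from the three vertices of the black face $\psi^{-1}(S)$ (no other vertex of $T$ lies on $S$, since a vertex on $S$ belongs to a segment through it, hence to the black face mapping to $S$). Thus $S$ splits into exactly two edges of $T$, the images of the two ``short'' edges of that black face (the sides meeting the interior vertex), while the ``long'' edge joining the two endpoints is the one mapped onto all of $S$ and is not an edge of $T$. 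Hence each edge of $C$ is $\psi$ of a unique short edge, and defining $\tilde C:=\psi^{-1}(C)$ as the concatenation of these short edges is precisely the stated convention.

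Next I would show $\tilde C$ is a simple closed curve. Around any vertex $v$ of $\cH^\dagger$, of the six incident edges exactly four are short (the black face for which $\psi(v)$ is interior contributes two, the two black faces for which $\psi(v)$ is an endpoint contribute one each), and these four short edges are in bijection with the four edges of $T$ incident to $\psi(v)$. Consequently, when two consecutive edges of $C$ meet at $\psi(v)$, the two corresponding short edges meet at $v$, so $\tilde C$ is a closed polygonal curve; and since $C$ is simple it uses at most two edges of $T$ at each vertex and each edge at most once, so by the same bijection $\tilde C$ visits each vertex and edge of $\cH^\dagger$ at most once and is simple. By Jordan's theorem it bounds a region $\Omega$.

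To analyse $U$ I would use the perturbation $\psi_\eps$ of \cref{fig:embedding}: for small $\eps>0$, $\psi_\eps$ agrees with $\psi$ on white faces but sends each black face to a thin non-degenerate triangle, and using \cref{prop:prop_geometrique} (white faces map to non-degenerate triangles tiling the plane with preserved orientation) one checks it is an orientation-preserving homeomorphism with $\psi_\eps\to\psi$ as $\eps\to0$. Then $C_\eps:=\psi_\eps(\tilde C)$ is a simple closed curve bounding $\psi_\eps(\Omega)$, a region converging to $D$, and a face of $\cH^\dagger$ lies inside $\tilde C$ iff its $\psi_\eps$-image lies inside $C_\eps$. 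For a white face $w$, its image is close to $\psi(w)$; since the interior of a white triangle meets no segment and hence no point of $C$, letting $\eps\to0$ gives $w\in U$ iff $\psi(w)\subset\bar D$, the first bullet. If $S=\psi(b)$ lies strictly inside $C$ up to finitely many points, then neither of its edges of $T$ lies on $C$, so the thin triangle $\psi_\eps(b)$ and the three white triangles touching $S$ all lie inside $C_\eps$ for small $\eps$; hence $b\in U$ with all three neighbours in $U$, the second bullet.

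The delicate point, which I expect to be the main obstacle, is the third bullet: identifying on which side of $\tilde C$ the black face sits. The key observation is that the convention always routes $\tilde C$ along short edges and never along the long edge of a black face; equivalently, $C_\eps$ runs along the (one or two) short sides of the thin triangle $\psi_\eps(b)$ but never along its base, which is exactly the side adjacent to the single white triangle. Therefore $\psi_\eps(b)$ stays connected, across its open base, to that single adjacent triangle, so the two lie on the same side of $C_\eps$; when $S$ has infinite intersection with $C$ this shows $b=\psi^{-1}(S)\in U$ iff the single-adjacent-triangle side of $S$ is inside $C$. Finally, along the portion of $S$ lying in $C$ the two sides of $S$ are locally separated by $C$, so among the three neighbouring white faces of $b$ the one on the single-triangle side lies in $U$ while the one(s) across the short edge(s) used by $C$ lie outside, giving at least one neighbour inside and one outside and completing the proof.
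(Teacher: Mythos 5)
Your proposal is correct and takes essentially the same route as the paper: establish that $\psi^{-1}(C)$ is a simple loop via the short-edge/long-edge correspondence, then pass to the perturbed (``fattened'') embedding of \cref{fig:embedding} to read off inside/outside, with your connectedness-across-the-base observation being exactly what the paper's picture-based justification of the third bullet encodes. One cosmetic slip: $\psi_\eps$ cannot literally agree with $\psi$ on white faces (their images tile the plane, so fattening the black faces forces perturbing the white ones too), but since your argument only uses $\psi_\eps \to \psi$, this is harmless.
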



\begin{proof}

Firstly, it is easy to check that  $\psi^{-1}(C)$ is a simple loop on $\cH^\dagger$. Indeed with our convention, $\psi^{-1}$ is injective and edges including their endpoints in the T-graph are sent to edges including their endpoints in the triangular lattice.

We can clearly define a perturbation of $\psi$ such that black triangles are sent to thin triangles rather than line segments and such that it respects the structure of the T-graph otherwise. This perturbation defines a proper embedding of $\cH^\dagger$ and $\cH$, therefore we can identify $U$ by looking at the interior of $\psi^{-1}(C)$ in that embedding. In that embedding, the first two points are immediate. The last one is clear from the picture of the perturbed embedding in the bottom-right panel of \cref{fig:embedding}.
\end{proof}

\subsection{Uniform crossing estimate} \label{sec:unif_cross}

\begin{defn}
Let $T$ be a non-degenerate T-graph.\ The random walk $X_t$ on $T$ is the continuous time Markov process on the vertices of $T$ defined by the following jump rates. If the process is at a vertex $v$
of $T$, call $v^+, v^-$ the endpoints of the unique segment which
contains $v$ in its interior. The rates of the jumps from $v$ to $v^\pm$ are $1/|v^\pm-v|$.
\end{defn}
Note that this random walk 
is automatically
a martingale thanks to the choice of the jump rates. The jump rates
defined above allow us to consider a T-graph as a weighted oriented
graph. From now on we view a T-graph either as a weighted directed graph or
as a subset of $\C$ as required by context. We denote by $X^v$ the walk started 
in $v$.


%
%
%
%

We now prove the Russo-Seymour-Welsh type uniform crossing estimate. The key 
input is the following uniform ellipticity bound. 
\begin{lemma}[\cite{LaslierCLT} Proposition 2.22 ]\label{prop:ellipticite_uniforme}
  There exists $c_0 > 0$ (depending continuously on $\Delta$)
  such that for any ${\bf n}\in\mathbb S^1$ and any $v\in T$,
  \[
   c_0 <\var(X^v_1 \cdot{\bf n}) < c_0^{-1} .
  \]
\end{lemma}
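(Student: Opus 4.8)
The plan is to exploit that $X_t$ is a martingale whose directional variance equals an accumulated quadratic variation with an explicit and very rigid per-vertex rate. First I would record the generator computation: if the walk sits at $v$, with $v^+,v^-$ the endpoints of the segment $S_v$ carrying $v$ in its interior, then $v^+-v$ and $v^--v$ are antiparallel, so the drift $|v^+-v|^{-1}(v^+-v)+|v^--v|^{-1}(v^--v)$ vanishes (this is the martingale property the rates were designed to give), while the variance rate in a direction $\mathbf n$ is
\[
\sigma^2(v,\mathbf n)=\tfrac{1}{|v^+-v|}\big((v^+-v)\cdot\mathbf n\big)^2+\tfrac{1}{|v^--v|}\big((v^--v)\cdot\mathbf n\big)^2=|S_v|\,(\hat u_{S_v}\cdot\mathbf n)^2,
\]
where $\hat u_{S_v}$ is the unit direction of $S_v$. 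By Dynkin's formula for the pure-jump martingale $X_t\cdot\mathbf n$ one gets $\var(X^v_1\cdot\mathbf n)=\E\big[\int_0^1\sigma^2(X_s,\mathbf n)\,ds\big]$, and summing over an orthonormal frame yields the trace identity $\E|X_1^v-v|^2=\E\big[\int_0^1|S_{X_s}|\,ds\big]$.

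The upper bound is then immediate: by part (iv) of \cref{prop:prop_geometrique} the segment lengths are bounded above by some $L_{\max}$ uniformly in $\lambda$, so $\sigma^2\le L_{\max}$ and $\var(X_1^v\cdot\mathbf n)\le L_{\max}$; the trace identity also gives $\E|X_1^v-v|^2\le L_{\max}$, so $X_1^v$ is localized. The whole difficulty is the lower bound, i.e. that $\cov(X_1^v)$ is uniformly nondegenerate. The trace identity only gives $\var(X_1^v\cdot\mathbf n)+\var(X_1^v\cdot\mathbf n^\perp)\ge L_{\min}>0$, so it suffices to rule out the covariance being squashed onto a line; the obstruction is that $\sigma^2(v,\mathbf n)$ can vanish for a single $v$ (when $S_v\perp\mathbf n$), so no one-step bound is available and one must follow the walk over an order-one time.

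The geometric input I would use is a uniform transversality statement: any two distinct segments meeting at a vertex of $T$ make an angle bounded below by some $\theta_0=\theta_0(\Delta)>0$, with angles also bounded away from $\pi$. This follows from part (iii) of \cref{prop:prop_geometrique}, since the wedges between consecutive rays at a vertex are corners of white faces, which are directly similar to $\Delta$ and hence have angles $\pi p_a,\pi p_b,\pi p_c$, all bounded away from $0$ and $\pi$. Consequently, whenever $S_v$ is nearly perpendicular to $\mathbf n$, the segments $S_{v^+},S_{v^-}$ carrying the two neighbours are transverse to $S_v$ and therefore satisfy $(\hat u_{S_{v^\pm}}\cdot\mathbf n)^2\ge c:=\sin^2(\theta_0/2)$; in other words the badly aligned vertices form a set all of whose neighbours are well aligned, so the walk can never take two consecutive badly aligned steps.

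It remains to convert this alternation into a lower bound on $\E\big[\int_0^1|S_{X_s}|(\hat u_{S_{X_s}}\cdot\mathbf n)^2\,ds\big]$. I would argue that with probability bounded below the walk leaves a badly aligned vertex before time $1$ and thereafter, by the strong Markov property, accumulates variance on a well-aligned segment: the holding rate $(|v^+-v|+|v^--v|)/(|v^+-v|\,|v^--v|)$ is bounded below by $4L_{\min}/L_{\max}^2$, so the walk moves off a badly aligned vertex before time $1$ with probability at least $1-e^{-4L_{\min}/L_{\max}^2}$. The main obstacle is precisely this last accounting, because the interior vertex of a segment need not be central: part (iv) of \cref{prop:prop_geometrique} bounds $|S_v|$ but not $|v^+-v|$ and $|v^--v|$ separately, so a well-aligned visit can be very short and contribute little. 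I expect this to be handled either (i) by accounting for the variance accumulated over a bounded number of consecutive steps rather than a single visit, using that after leaving a vertex near one endpoint the walk lands on a transverse segment, or (ii) more cleanly by the environment seen from the walk: since $\phi$ is invariant under $\lambda\mapsto r\lambda$ with $r>0$ it depends only on $\arg\lambda$, and \cref{prop:translation} then realises the re-rooted walk $X^v$ as the walk from the origin on $T_{\lambda''\Delta}$ with $\arg\lambda''$ in the compact circle $S^1$; the map $(\arg\lambda'',\mathbf n)\mapsto\var(X_1^0\cdot\mathbf n)$ is continuous (the walk being localized, only a bounded neighbourhood matters) and strictly positive for nondegenerate environments, so compactness gives a lower bound $c_0(\Delta)>0$ depending continuously on $\Delta$, the only delicate point being the behaviour at the measure-zero degenerate values of $\arg\lambda''$ that the orbit may approach.
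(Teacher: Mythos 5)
A preliminary remark: the paper does not prove this lemma at all --- it is imported verbatim as Proposition 2.22 of \cite{LaslierCLT} --- so your attempt can only be assessed on its own merits, not against an in-paper argument. Your preparatory steps are correct: the jump rates make $X$ a martingale with directional variance rate $\sigma^2(v,\mathbf n)=|S_v|\,(\hat u_{S_v}\cdot\mathbf n)^2$, the identity $\var(X^v_1\cdot\mathbf n)=\E\bigl[\int_0^1\sigma^2(X_s,\mathbf n)\,ds\bigr]$ holds, the upper bound follows from part (iv) of \cref{prop:prop_geometrique}, and your transversality claim is right (each wedge at a vertex of $T$ is a single corner of a white face, so any two of the three segments through a vertex meet at an angle in $[\theta_0,\pi-\theta_0]$ with $\theta_0=\pi\min(p_a,p_b,p_c)$). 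But the uniform lower bound, which is the entire content of the lemma, is never established: you correctly locate the obstacle (well-aligned visits can be arbitrarily short) and then offer two unexecuted sketches, each with a concrete hole.

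Route (i) cannot work with only the inputs you permit yourself. Bounded segment lengths, quantized angles, and the alternation property are all compatible with the following local picture: badly aligned vertices sitting near the middle of their (perpendicular-to-$\mathbf n$) segments, joined through well-aligned vertices each lying within $\eps$ of an endpoint of its own segment. A visit to such a well-aligned vertex lasts time $O(\eps)$ and contributes expected squared $\mathbf n$-increment $|v^+-v|\,|v^--v|\,(\hat u_{S_v}\cdot\mathbf n)^2=O(\eps)$, after which the walk with high probability slips onto another sticky, badly aligned vertex; in unit time this yields total variance $O(\eps)$, not a constant. So any correct proof must use quantitative features of T-graphs beyond \cref{prop:prop_geometrique}: for instance, that $v$ is $\eps$-close to an endpoint of $S_v$ exactly when an adjacent white triangle has diameter $O(\eps)$ (its sides are comparable, being similar to $\Delta$); that the three segments bounding such a tiny triangle are pairwise transverse, each containing one of its corners as interior vertex; and that consequently the walk leaves the $O(\eps)$-cluster at rate of order one through a \emph{macroscopic} jump whose direction is, with probability bounded below, not close to perpendicular to $\mathbf n$. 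This escape mechanism --- tiny white triangles behave like single non-degenerate vertices --- is the heart of the matter and is absent from your proposal. Route (ii) fails for a different reason: the re-rooting reduction (via \cref{prop:translation} and scale invariance of $\phi$ in $\lambda$) is correct, but compactness converts continuity plus positivity on \emph{all} of $\mathbb S^1$ into a uniform bound, whereas you have them only at non-degenerate parameters; the degenerate parameters are in general dense in the circle (one must avoid finitely many phases for each $(m,n)\in\Z^2$, and these accumulate), and a function that is continuous and positive merely on a dense full-measure subset of a compact set can have infimum zero. Extending the variance continuously and positively through the degenerations --- where jump rates blow up and vertices merge with segment endpoints --- is not a removable technicality; it is a repackaging of exactly the escape analysis above.
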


Using this bound we first control the angle at which the random walk
exits a ball. Let $T^\d$ denote the T-graph rescaled by $\delta$. We
emphasise that the random walk keeps the original unrescaled time 
parametrisation.
\begin{lemma}\label{lem:eta}
There exists $\delta_1$ and $\eta$ such that for all $\delta \leq
\delta_1 $, for any $v \in T^{\d}$, if $\tau$ denotes the exit time of $B(v, 1)$ then for all $\theta \in [-\pi,\pi)$,
\[
\P\left[ \Arg(X^v_\tau - v) \in [\theta, \theta + \pi - \eta] \right] > \eta
\]
where $\Arg$ denote the usual argument in $[-\pi,\pi)$ but the
interval $[\theta, \theta + \pi - \eta]$ is interpreted cyclically
if $\theta + \pi - \eta \ge \pi$.
\end{lemma}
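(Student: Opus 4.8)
The plan is to reduce the statement to a purely geometric property of the \emph{exit point}, and then to extract that property from the martingale property of the walk together with the ellipticity bound of \cref{prop:ellipticite_uniforme}. First I would observe that multiplying by $\delta>0$ does not change arguments, so passing to $T^\d$ and exiting $B(v,1)$ is exactly the same as running the original (unrescaled) walk and exiting the large ball $B(v_0,R)$ with $R=1/\delta$ and $v=\delta v_0$. Writing $Y=X^{v_0}_\sigma-v_0$ for the exit point from $B(v_0,R)$, where $\sigma$ is the exit time, we have $\Arg(X^v_\tau-v)=\Arg(Y)$, so the lemma becomes: for $R$ large and uniformly in $v_0$, the law of $\Arg(Y)$ charges every arc of length $\pi-\eta$ with probability at least $\eta$.

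Next I would record two moment estimates for $Y$. Since the walk is a martingale with increments bounded by the maximal segment length (\cref{prop:prop_geometrique}\ref{prop:prop_geometrique:non_degenere}), and the ellipticity bound makes it genuinely diffusive so that $\E[\sigma]<\infty$, optional stopping gives $\E[Y]=0$. For the second moment, I would fix a unit vector $\mathbf n$ and consider the martingale $M_t=(X_t-v_0)\cdot\mathbf n$; a direct computation of its predictable quadratic variation gives the density $r_{\mathbf n}(u)=|u^+-u^-|\,(\mathbf s_u\cdot\mathbf n)^2$, where $\mathbf s_u$ is the direction of the segment through $u$ and $u^\pm$ its endpoints, and \cref{prop:ellipticite_uniforme} says precisely that $\E_u\!\int_0^1 r_{\mathbf n}(X_s)\,ds\in(c_0,c_0^{-1})$ uniformly in $u$ and $\mathbf n$. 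Summing this over unit time intervals up to $\sigma$, and using $R^2\le\E[|Y|^2]\le (R+O(1))^2$ (exit through $\partial B(v_0,R)$ with bounded overshoot), I would obtain, uniformly in $\mathbf n$ and $v_0$, that $\E[\sigma]\asymp R^2$ and hence
\[
c_1 R^2\ \le\ \E\big[(Y\cdot\mathbf n)^2\big]\ \le\ c_1^{-1}R^2
\]
for a constant $c_1>0$ depending only on $\Delta$.

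Granting these, the conclusion would follow from a soft geometric fact, which I expect to be the routine part: if $\E[Y]=0$, $R\le|Y|\le R+O(1)$, and $\E[(Y\cdot\mathbf n)^2]\ge c_1R^2$ for every direction $\mathbf n$, then $\Arg(Y)$ charges every arc of length $\pi-\eta$ with probability $>\eta$ once $\eta=\eta(c_1)$ is small. To see this I would argue by contradiction: suppose an arc $I$ of length $\pi-\eta$ with midpoint direction $\mathbf m$ satisfies $\P[\Arg Y\in I]\le\eta$. Writing $Y\cdot\mathbf m=|Y|\cos\theta$ with $\theta$ the angle between $Y$ and $\mathbf m$, the positive part $(Y\cdot\mathbf m)^+$ lives on $\{\theta<\pi/2\}$, which splits into the part inside $I$ (probability $\le\eta$, integrand $\le|Y|$) and a sliver of angular width $\eta$ on which $\cos\theta\le\sin(\eta/2)$; hence $\E[(Y\cdot\mathbf m)^+]\le C\eta R$. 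Mean zero forces $\E[(Y\cdot\mathbf m)^-]=\E[(Y\cdot\mathbf m)^+]$, so $\E|Y\cdot\mathbf m|\le 2C\eta R$, and therefore
\[
c_1R^2\ \le\ \E\big[(Y\cdot\mathbf m)^2\big]\ \le\ \|Y\cdot\mathbf m\|_\infty\,\E|Y\cdot\mathbf m|\ \le\ (R+O(1))\cdot 2C\eta R ,
\]
which is impossible once $\eta<c_1/(3C)$ and $R$ is large. Fixing such an $\eta$ (and $\delta_1$ accordingly) proves the lemma, since a smaller $\eta$ only lengthens the arc and weakens the required lower bound.

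The hard part will be the uniform lower bound $\E[(Y\cdot\mathbf n)^2]\ge c_1R^2$ in the second-moment step. Two features make it delicate: the instantaneous rate $r_{\mathbf n}(u)$ vanishes on segments perpendicular to $\mathbf n$, so there is no pointwise lower bound — this is exactly what \cref{prop:ellipticite_uniforme} repairs at the scale of one unit of time — and, more seriously, the time intervals during which the walk sits within $O(1)$ of $\partial B(v_0,R)$ accumulate little quadratic variation before the walk escapes, so the naive sum over unit intervals breaks down near the boundary. Controlling the contribution of these boundary intervals (for instance by showing, via diffusivity and the upper ellipticity bound, that the walk spends only a negligible fraction of its time there) is the technical heart of the argument; by contrast the first-moment computation and the final geometric lemma are comparatively straightforward.
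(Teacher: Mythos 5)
Your skeleton matches the paper's proof exactly: reduce to the exit point $Y$ of a large ball, get $\E[Y]=0$ by optional stopping, extract a uniform variance lower bound in every direction from \cref{prop:ellipticite_uniforme}, and derive a contradiction by projecting onto the bisector of an arc that is barely charged. Your closing geometric argument is correct, and is actually tidier than the paper's (which proves positivity of the probability first and only remarks that a "universal bound on variances" upgrades it to a lower bound of $\eta$; your $L^1$--$L^\infty$ interpolation handles $\P[\Arg Y\in I]\le\eta$ directly). The genuine gap is that the one estimate you defer, $\E[(Y\cdot\mathbf n)^2]\ge c_1R^2$, is precisely what the paper's proof consists of, and your sketch of it does not go through as written: summing the unit-interval ellipticity bound "up to $\sigma$" fails not only because of the last truncated interval, but because the conditional expectation of a truncated increment $\int_{k}^{(k+1)\wedge\sigma}r_{\mathbf n}(X_s)\,ds$ given $\cF_k$ on $\{\sigma>k\}$ has no lower bound at all (the walk may be about to exit), so \cref{prop:ellipticite_uniforme} cannot be applied interval by interval to the stopped walk. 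Declaring this step "the technical heart" and leaving it open means the lemma is not proved.

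Moreover, the repair you propose (showing the walk spends a negligible fraction of its time within $O(1)$ of $\partial B(v_0,R)$) is both unproven and beside the point; no occupation-time estimate is needed. The paper's mechanism is to never truncate: apply ellipticity to the \emph{unstopped} walk, for which $(M_k^2-c_0k)_{k\in\N}$ is an honest submartingale along integer times, where $M_t=(X_t-v_0)\cdot\mathbf n$ in unrescaled time. Every unit interval $[k,k+1]$ is then run in full whether or not $\sigma$ occurs inside it, and the indicator $\mathbf 1_{\sigma>k}$ \emph{is} $\cF_k$-measurable, so
\[
\E\bigl[\langle M\rangle_{\lceil\sigma\rceil}\bigr]\;=\;\sum_{k\ge 0}\E\Bigl[\mathbf 1_{\sigma>k}\,\E\Bigl[\textstyle\int_k^{k+1}r_{\mathbf n}(X_s)\,ds\,\Big|\,\cF_k\Bigr]\Bigr]\;\ge\; c_0\sum_{k\ge0}\P[\sigma>k]\;\ge\; c_0\,\E[\sigma].
\]
Combining this with (i) Burkholder--Davis--Gundy with $p=2$ and the upper ellipticity bound, which give $\E[\sigma]\ge cR^2$, (ii) optional stopping at $\lceil\sigma\rceil$ justified by monotone and dominated convergence, and (iii) the fact that the overshoot $M_{\lceil\sigma\rceil}-M_\sigma$ is accumulated over at most one unit of time and hence has variance $O(1)$, one gets $\E[M_\sigma^2]\ge c_0\E[\sigma]-O(1)\ge c_1R^2$, uniformly in $\mathbf n$ and $v_0$. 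So the missing idea is: let the walk run past the exit time and pay only an $O(1)$ overshoot correction, which is negligible against $R^2$. (Also, the upper bound $\E[(Y\cdot\mathbf n)^2]\le c_1^{-1}R^2$ you state is trivial from $|Y|\le R+O(1)$ and plays no role.)
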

\begin{proof}
First let us prove that we can choose $\eta$ such that for all
$\theta \in [-\pi,\pi)$
\begin{equation}
\P\left[ \arg(X^v_\tau - v) \in [\theta, \theta + \pi - \eta] \right]
> 0. \label{eq:eta}
\end{equation}

Let us translate and rotate the coordinate frame to set $v =0$ and $\theta = \eta/2$ so that the arc $[\theta , \theta + \pi - \eta]$ is symmetric around the vertical axis. We let $Y_t$ denote the projection of the random walk on the vertical axis. 

Since the random walk is a martingale we have $\E[Y_\tau] = 0$ by
optional stopping (which we can use since $Y_{\tau \wedge t}$ is uniformly
bounded by $1$). Further
\cref{prop:ellipticite_uniforme} ensures that
$\var(Y_\tau) \geq cc_0$ where $c_0$ is as in 
\cref{prop:ellipticite_uniforme}. To see this first notice from
\cref{prop:ellipticite_uniforme} that $(Y_k^2 - c_0 \delta^2 k)_{k \in
  \N}$ is a submartingale. Further notice that by Burkholder--Davis--Gundy
inequality  with $p=2$ we have $\E(\tau) \ge c\delta^{-2}$. Using this, the optional stopping theorem and the monotone and dominated convergence theorems, 
\begin{equation}
  \label{eq:36}
  \E(Y_\tau^2) \ge c_0 \delta^2 \E(\tau) \ge cc_0.
\end{equation}
Suppose by contradiction that
\[
\P\left[ \arg(X^v_\tau - v) \in [\theta, \theta + \pi - \eta] \right] = 0.
\]
Then $Y_\tau$ is a random variable in $[-1+O(\delta),\sin(\eta/2)
+O(\delta)]$. Since $\E[Y_\tau] =0$, we have a bound $\var(Y_\tau)
\leq (\sin(\eta/2) +O(\delta)) (1+O(\delta))$. Indeed, this bound is
valid for any centred random variable in $[-1-O(\delta),\sin(\eta/2) +
O(\delta)]$. However we saw above that $\var(Y_\tau) \geq cc_0$.
This is a contradiction if $\eta,\delta$ are small enough. 

To obtain a lower bound of at least $\eta$ in \eqref{eq:eta}, just observe that there is also a universal bound of the variance of variables in $[-1,1]$ with $\P[Y > \sin(\eta/2)] \leq \eta$ and $\E[Y] = 0$ so the same proof applies.
\end{proof}

\begin{figure}
 \centering
\includegraphics[scale = 0.5]{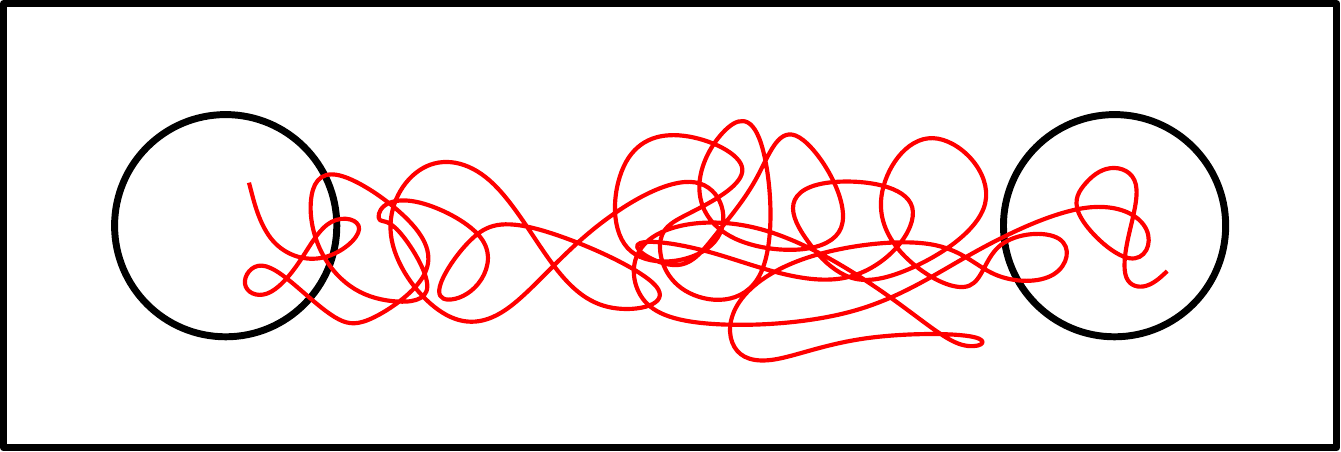}
\caption{An illustration of the uniform crossing 
property.}\label{fig:crossing}
\end{figure}

We now give a precise definition of what it means for a graph $G$ (embedded in $\C$) to satisfy the \textbf{uniform crossing 
property}. Let $G^\d$ denote the rescaling of the edges of the graph $G$ by $\delta$. Let $A^\d$ denote the graph spanned by the vertices of $G^\d $ in $A$ for any $A \subset \C$. 
Let $R$   
   be the horizontal rectangle $[0,3]\times [0,1]$ and $R'$ be the vertical 
rectangle $[0,1]\times [0,3]$. Let $B_1 := B((1/2,1/2),1/4)$ be the 
\emph{starting ball} and $B_2:= B((5/2,1/2),1/4) $ be the \emph{target ball} 
(see \cref{fig:crossing}). There 
exists universal constants $\delta_0 >0$ and $\alpha_0>0$ such that for all $z 
\in \C$, $\delta \in (0,\delta_0)$, $v \in B_1$ such that $v+z \in G^\d$,
\begin{equation}
  \P_{v+z}(X \text{ hits }B_2+z \text{ before exiting } (R+z)^\d) 
>\alpha_0.\label{eq:cross_left_right}
\end{equation}
The same statement as above holds for crossing from right to left, i.e., for 
any $v \in B_2$, \eqref{eq:cross_left_right} holds if we replace $B_2$ by 
$B_1$. 
Also, the same statement holds for the vertical rectangle $R'$. Let $B_2' = 
B((1/2,5/2),1/4)$.  
Then for all $z \in \C$, $\delta \in (0,\delta_0)$, $v \in 
B_1$ such that $ v+z \in \Gd$,
\[
 \P_{v+z}(X \text{ hits }B'_2+z \text{ before exiting } (R'+z)^\d)>\alpha_0.
\]
Again, the same statement holds for crossing from top to bottom, i.e., from 
$B_2'$ to $B_1$.

\begin{thm}\label{prop:satisfy_crossing}
Any non degenerate T-graph satisfies the uniform crossing property.
\end{thm}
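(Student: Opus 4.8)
The plan is to chain the exit estimate of \cref{lem:eta} along a corridor joining $B_1$ to $B_2$, multiplying uniform per-step lower bounds via the strong Markov property, while controlling the transverse fluctuations through the martingale property of the walk and the two-sided variance bounds of \cref{prop:ellipticite_uniforme}. First I would record a scale-invariant form of \cref{lem:eta}: applying it to the graph rescaled by $r$ shows that for every $r$ and every $\delta \le r\delta_1$, from any vertex $v$ the walk exits $B(v,r)$ with $\Arg(X_\tau - v)$ in any prescribed arc of width $\pi-\eta$ with probability at least $\eta$, and that during this excursion the walk stays in $B(v,r)$ up to an $O(\delta)$ overshoot. Since all estimates are uniform in $v$ and in the translate $z$, and the coordinate directions are interchangeable, it suffices to treat the single left-to-right crossing of $R$.

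I would then fix a small radius $r=r(\eta,c_0)$, place $N = O(1/(r\sin\eta))$ balls of radius $r$ with centres on the segment from $(1/2,1/2)$ to $(5/2,1/2)$, and define stopping times directing the walk from one ball to the next: at each such time the prescribed arc is the rightward half-arc, which forces a horizontal progress of at least $r\sin(\eta/2)$ and, by the confinement above, keeps the walk in the strip during that step. On the event that all $N$ guided steps succeed, which has probability at least $\eta^N>0$, the walk reaches the vertical line through the centre of $B_2$.

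The main obstacle is the transverse confinement, and here ellipticity rather than \cref{lem:eta} alone is essential: a half-plane exit estimate permits almost-vertical displacements, so the rightward guidance does not by itself keep the vertical coordinate inside $[0,1]$, and conditioning on the forward-progress events could in principle bias the vertical increments into a ballistic drift. The point I would exploit is that each coordinate is itself a martingale with one-step variance bounded on both sides by \cref{prop:ellipticite_uniforme}, so the vertical coordinate is diffusive: its accumulated variance over the crossing is of order $Nr^2 \asymp r/\sin\eta$, below the strip width once $r$ is small. Reconciling this diffusive control with the (exponentially unlikely) forward event is the delicate step; rather than forcing the walk into a rigid tube, I would argue in the spirit of Krylov--Safonov, using the spread of the exit and hitting measures guaranteed by \cref{lem:eta} and \cref{prop:ellipticite_uniforme} to propagate positivity of the harmonic function $u(v)=\P_v(X \text{ hits } B_2 \text{ before exiting } R^\delta)$, which equals one on $B_2$, along a Harnack chain of balls running back to $B_1$ inside $R$. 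This yields $\alpha_0>0$ uniform in $\delta<\delta_0$, $z$ and $v$, after which the remaining three statements follow with the directions permuted.
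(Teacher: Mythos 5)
Your diagnosis of the crux is exactly right: the arc in \cref{lem:eta} has width $\pi-\eta$, so ``rightward'' guided steps do not confine the walk to the strip, and the diffusive bound on the vertical coordinate from \cref{prop:ellipticite_uniforme} does not survive conditioning on the exponentially rare guidance event. But your proposed resolution does not close this gap, it relocates it. To propagate positivity of $u(v)=\P_v(X \text{ hits } B_2 \text{ before exiting } R^\delta)$ from one ball of a Harnack chain to the next, you need a uniform lower bound on the probability that the walk started anywhere in one ball hits the previous ball before leaving a slightly larger concentric ball (or before leaving $R$). That ball-to-ball hitting estimate is a small-scale instance of exactly the kind of statement being proven; it does not follow from \cref{lem:eta}, which controls only the \emph{direction} of the exit point from a single ball, not the hitting of a target set, and chaining \cref{lem:eta} to produce it runs into the very conditioning/confinement problem you set out to avoid. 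Nor can Krylov--Safonov be invoked off the shelf: its measure-theoretic input (the growth lemma, that the walk started at the centre of a ball hits any subset of proportional measure before exiting the double ball, usually obtained from an ABP-type estimate) is not available for these directed, aperiodic T-graph walks --- providing such harmonicity estimates is precisely the purpose of \cref{prop:satisfy_crossing}. So the final step of your plan is, as stated, either unproven or circular.

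The missing idea is a mechanism that makes forward progress and transverse confinement hold on the \emph{same} positive-probability event, and the paper achieves this with a single scalar Lyapunov function: $f(x,y) = -x + 10(y-\tfrac12)^2 + [10(x-\tfrac52)\vee 0]^{10}$ (up to the precise constants), which is minimised in $B_2$, is larger on $\partial R$ than on $B_1$, and has gradient bounded away from $0$ and bounded second derivatives on $R \setminus B_2$. A Taylor expansion then gives $r,\eta'>0$ such that from any $z \in R\setminus B_2$, a displacement of length $r$ in \emph{any} direction of the arc of width $\pi-\eta$ centred opposite to $\nabla f(z)$ decreases $f$ by at least $\eta'$; by \cref{lem:eta} each such step succeeds with probability at least $\eta$, uniformly in the position and in $\delta$. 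On the event that the first $O(1/\eta')$ steps all succeed --- probability at least $\eta^{O(1/\eta')}$, which is the uniform $\alpha_0$ --- the value of $f$ never rises above its starting value, so the walk can never reach $\partial R$, and after $O(1/\eta')$ steps it must be in $B_2$. The quadratic term in $y$ is what replaces your Harnack chain: vertical excursions are penalised inside the same decrease event, so no conditioning issue ever arises. Had you proved the ball-to-ball hitting lemma your chain needs (which can be done by this same argument with $f$ equal to the distance to the target and steps proportional to the current distance), you would in effect have reproduced the paper's proof at a smaller scale.
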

\begin{proof}
We will only prove \eqref{eq:cross_left_right} as the proof of the other cases 
are identical.
The idea is to look at the random walk stopped when it exits small
macroscopic discs. We then construct a function on the rectangle which
has a positive probability to decrease at each of these steps and such
that if it decreases for the $k$ first steps (where $k$ is uniform in
the location of the rectangle) a crossing has to happen.


 Let $f(x, y) = -x + 10(y-\frac12)^2 +
[10(x-\frac 52) \vee 0 ]^{10}$. Notice that $f$ achieves its global
minimum in $B_2$ and its value on $\partial R$ is bigger than in
$B_1$. Furthermore,  it has bounded second derivatives in $R \setminus
B_2$ and the norm of its gradient is also lower bounded in $R
\setminus B_2$. Choose $\eta$ as in \cref{lem:eta}. There exist $r,\eta'>0$ such that
\[
\forall z \in R \setminus B_2,\, \, \exists \theta \, \, s.t \, \, \forall  \, 
\,\varphi \in [\theta, \theta + \pi - \eta], \, f(z+re^{i\varphi}) \leq f(z) - 
\eta'.
\]
This is easily seen from writing a Taylor expansion around $z$,
choosing $\theta + \pi/2 - \eta/2$ to be the direction opposite to the gradient
and choosing $r,\eta'$ small enough. We fix such a pair
$r,\eta'$. Then we define a sequence of stopping time $\tau_k$ by
$\tau_0 = 0$ and  $\tau_{k+1} = \inf \{t > \tau_k | X_t \notin
B(X_{\tau_k},r) \}$. \cref{lem:eta} and the definition of $r,\eta'$
above show that until the random walk exits $R \setminus B_2$, it
satisfies $\P[f(X_{\tau_{k+1}}) \le f(X_{\tau_{k}}) - \eta'] \geq
\eta$. Call a step good if $f(X_{\tau_{k+1}}) \le f(X_{\tau_{k}}) - \eta'$. 
Since the value of $f$ on $\partial R$ is bigger than that in
$B_1$, if the first $O(1/\eta')$ steps are all good then the
random walk must hit $B_2$ before exiting $R$. Clearly this has positive probability so we are done.
\end{proof}

\subsection{Recurrence}

In this section, we prove that the random walk on a T-graph is recurrent. The 
main point is an asymptotic estimate of the conjugate Green function (i.e the 
discrete version of argument function). Let us note that the proof of this 
estimate comes from the exact solvability of the dimer model.

\begin{prop}\cite{KenyonHex}
  Let $T$ be a T-graph of parameters $\lambda$, $\Delta$. Let $w$ be a face of  
$T$ and let $d$ be a half line from the interior of $w$ to infinity that avoids 
all vertices of $T$. There exists a unique (up to a constant) function $G^*_{w 
d} : T \rightarrow \mathbb C$ that is discrete harmonic except for a  
discontinuity when crossing $d$: that is, if $q_{xy}$ are the rates of the continuous time simple random walk on $T$ then
$$
\sum_{y \sim x} q_{xy} (f(y) - f(x)) = \#\{y \sim x: (x,y) \text{ crosses $d$ clockwise } \} - \#\{y \sim x: (x,y) \text{ crosses $d$ anticlockwise } \}  
$$
This function 
satisfies
  \[  
  G^*_{wd}(\psi(b))  = \frac{1}{2\pi} \Bigl( \arg_d(\psi(b)-w) + \frac{ \Im \left(\lambda (\tfrac{\beta}{\gamma})^{m(w)} (\tfrac{\beta}{\alpha})^{n(w)} \right)}{\Re \left(\lambda (\tfrac{\beta}{\gamma})^{m(w)} (\tfrac{\beta}{\alpha})^{n(w)} \right)} \log \abs{\psi(b)- w} \Bigr) + C +O(1/|\psi(b)-w| )
\]
where $\arg_d$ denotes the determination of the argument with a $2\pi$ 
discontinuity on the half line $d$, $\alpha, \beta ,\gamma$ are the angles of 
$\Delta$ and $C$ is the constant up to which $G^*_{wd}$ is defined.
\end{prop}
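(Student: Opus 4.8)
The plan is to realise $G^*_{wd}$ as the argument (discrete harmonic conjugate) of the dimer coupling function of the hexagonal lattice and to read off the asymptotics from the exact formula for that coupling function. First I would use \cref{prop:translation} to reduce to the case where $w$ is the face just to the left of the fundamental domain $\cH_1$, so that the quantity $\lambda_w := \lambda (\tfrac{\beta}{\gamma})^{m(w)}(\tfrac{\beta}{\alpha})^{n(w)}$ appearing in the statement is exactly the parameter $\lambda'$ of the translated T-graph; this both explains the $(m(w),n(w))$-dependence of the $\log$ coefficient and lets me fix notation with $w$ at the origin. By its defining relation, $G^*_{wd}$ is discrete harmonic for the martingale random walk away from the cut $d$, with total monodromy $1$ around $w$ (the net signed number of crossings of $d$ along any loop encircling $w$ equals $1$). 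It is therefore the discrete harmonic conjugate of the Green function with a logarithmic source at $w$, and under the Kenyon--Sheffield dictionary between spanning trees on $T$ and dimers on $\cH$ it is identified, up to normalisation, with the argument of the inverse Kasteleyn (coupling) function $b \mapsto K^{-1}(b,w)$ of the honeycomb carrying the weights associated with $\Delta$.

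The core input is the exact solvability. I would write $K^{-1}(b,w)$ as a double contour integral over the spectral torus,
\[
K^{-1}(b,w) = \frac{1}{(2\pi)^2}\int\!\!\int \frac{u^{\,m(b)-m(w)}\, v^{\,n(b)-n(w)}}{P(u,v)}\,\frac{\dd u}{u}\frac{\dd v}{v},
\]
where $P$ is the characteristic polynomial of the weighted hexagonal lattice; for a slope in the interior of the Newton polygon (guaranteed by $p_a,p_b,p_c\in(0,1)$) the polynomial $P$ has exactly two conjugate simple zeros $(u_0,v_0)$, $(\bar u_0,\bar v_0)$ on the unit torus, determined by the angles $\pi p_a,\pi p_b,\pi p_c$ of $\Delta$. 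A stationary-phase/residue analysis at these two Fermi points gives, uniformly in the direction of $\psi(b)-w$, the leading behaviour $K^{-1}(b,w) = \tfrac{A}{\psi(b)-w} + O(|\psi(b)-w|^{-2})$ for a computable complex constant $A$; the affine change of variables relating the lattice coordinates $(m,n)$ to the embedded positions $\psi(b)$, controlled by the linear part of the embedding (\cref{prop:prop_geometrique}~(i)), is precisely what makes the Euclidean displacement $\psi(b)-w$ appear and what ties $A$ to $\lambda_w$.

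It then remains to integrate this gradient-type estimate to recover $G^*_{wd}$ itself. Writing $\zeta := \psi(b)-w$, the rotational (monodromy) part contributes $\tfrac{1}{2\pi}\arg_d(\zeta)$, the cut $d$ being exactly where the chosen determination of the argument jumps by $2\pi$; the anisotropy of the underlying real-linear coordinate change (equivalently, the fact that the local covariance of the walk is not isotropic) converts part of the argument into a logarithm and produces the coefficient $\Im(\lambda_w)/\Re(\lambda_w)$. Indeed one checks the identity $\arg_d(\zeta)+\tfrac{\Im \lambda_w}{\Re \lambda_w}\log|\zeta| = \Im\!\big(\tfrac{\lambda_w}{\Re \lambda_w}\log \zeta\big)$, so the whole leading term equals $\tfrac{1}{2\pi}\Im\big(\tfrac{\lambda_w}{\Re\lambda_w}\log(\psi(b)-w)\big)$, matching the claim; the summable $O(|\cdot|^{-2})$ corrections to the gradient integrate to the stated $O(1/|\psi(b)-w|)$ error, and the free additive constant is $C$.

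Finally, existence follows from this construction, while uniqueness up to an additive constant is a Liouville statement: the difference of two solutions is genuinely discrete harmonic on all of $T$ and, sharing the same logarithmic asymptotics, is bounded, and the uniform ellipticity (\cref{prop:ellipticite_uniforme}) together with the crossing bound (\cref{prop:satisfy_crossing}) yield enough Harnack/maximum-principle control to force it to be constant. I expect the main obstacle to be the saddle-point analysis of the double integral carried out \emph{uniformly in the direction} of $\psi(b)-w$ and to the second order needed for the $O(1/|\psi(b)-w|)$ remainder, together with the bookkeeping of the real-linear coordinate change that correctly pins the logarithmic coefficient to $\Im(\lambda_w)/\Re(\lambda_w)$ rather than to a mere constant rotation.
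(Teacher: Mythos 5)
First, a point of order: the paper itself contains no proof of this proposition --- it is imported wholesale from \cite{KenyonHex} --- so there is no internal argument to compare with, and your outline must be measured against the argument actually carried out there. Your skeleton (exact solvability, double-contour formula for the coupling function, analysis at the zeros of $P$ on the torus, integration of a gradient estimate, Liouville-type uniqueness) does mirror that argument in broad strokes, but there is a genuine gap at the central analytic step. In the liquid (rough) phase corresponding to $p_a,p_b,p_c\in(0,1)$, the inverse Kasteleyn matrix does \emph{not} satisfy $K^{-1}(b,w)=A/(\psi(b)-w)+O(|\psi(b)-w|^{-2})$ for any single constant $A$: both conjugate zeros $(u_0,v_0)$ and $(\bar u_0,\bar v_0)$ of $P$ contribute to the saddle-point analysis, and the true asymptotics has the two-term form
\[
K^{-1}(b,w)=\frac{1}{2\pi}\left(\frac{P_1(b,w)}{\ell(b)-\ell(w)}+\frac{P_2(b,w)}{\overline{\ell(b)-\ell(w)}}\right)+O\left(\frac{1}{|b-w|^{2}}\right),
\]
where $P_1,P_2$ are unimodular prefactors oscillating with the lattice coordinates of $b$ and $w$ (powers of the Fermi-point coordinates). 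In particular $K^{-1}(\cdot,w)$ oscillates on the lattice scale, it admits no single-pole expansion ``uniform in direction'', and its argument is neither harmonic nor convergent along rays --- so the identification of $G^*_{wd}$ with ``the argument of $K^{-1}(\cdot,w)$'' cannot be taken literally either. What makes the proposition true, and what is the actual heart of \cite{KenyonHex}, is that the discrete exponentials $\lambda(\tfrac{\beta}{\gamma})^m(\tfrac{\beta}{\alpha})^n$ built into the flow $\phi$ (hence into the edge vectors of $T$) are matched to exactly these Fermi prefactors: the increments of $G^*_{wd}$ along T-graph edges are $K^{-1}$ \emph{times} these phase-carrying edge factors, the oscillations cancel only in that product, and one then obtains a discrete version of $\Im\bigl(\tfrac{\lambda_w}{\Re\lambda_w}\,\tfrac{d\zeta}{\zeta}\bigr)$ whose primitive is your leading term. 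Your outline attributes the appearance of $\psi(b)-w$ and of the coefficient $\Im\lambda_w/\Re\lambda_w$ to the affine map $\ell$ of \cref{prop:prop_geometrique}~(i); but $\ell$ only locates $\psi(b)$, and without the phase-cancellation bookkeeping there is no well-defined constant $A$ at all and the logarithmic coefficient cannot be derived. This is a missing idea, not a presentational shortcut.

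On the other hand, several pieces of your proposal are sound and worth keeping. The reduction via \cref{prop:translation} is the right way to account for the $(m(w),n(w))$-dependence; the identity $\arg_d\zeta+\tfrac{\Im\lambda_w}{\Re\lambda_w}\log|\zeta|=\Im\bigl(\tfrac{\lambda_w}{\Re\lambda_w}\log\zeta\bigr)$ is correct and packages the leading term properly; and tail-summation of a genuine $O(r^{-2})$ gradient estimate is indeed the mechanism producing the $O(1/|\psi(b)-w|)$ error. Two caveats on uniqueness. As literally stated, ``unique up to constant'' among all functions harmonic off $d$ with the prescribed jump is false: the coordinate functions $\Re z$ and $\Im z$ are harmonic on any T-graph (this is the martingale property) and may be added to any solution; uniqueness must be read within the class of functions having the stated asymptotics, which is implicitly how your Liouville argument proceeds (the difference of two such solutions is bounded and everywhere harmonic, hence constant). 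Second, you were right to base the Liouville step on \cref{prop:ellipticite_uniforme} and \cref{prop:satisfy_crossing} rather than on recurrence: \cref{T:recurrence} is derived \emph{from} this proposition, so that route would have been circular.
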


In fact, note that because $T$ is a T-graph, at most one neighbour of $x$ is such that $(x,y)$ crosses $d$. Note that by definition, if $X_t$ is a random walk on $T$, then $G^*_{wd} (X_t) - \# (\text{signed crossings of $d$ before time $t$})$ is a martingale. From this it is easy to check recurrence.

\begin{thm}\label{T:recurrence}
The random walk on a T-graph is recurrent.
\end{thm}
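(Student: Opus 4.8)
The plan is to extract a genuine logarithmic potential for the walk from the martingale $M^w_t := G^*_{wd}(X_t) - N_t$, where $N_t$ is the signed crossing count of the half-line $d$, by combining two copies of it for two well-chosen faces $w$. First I would rewrite $M^w$ using the asymptotics of the preceding proposition. Writing $\zeta_w = \lambda (\tfrac{\beta}{\gamma})^{m(w)} (\tfrac{\beta}{\alpha})^{n(w)}$ and $\kappa(w) = \Im(\zeta_w)/\Re(\zeta_w)$, that estimate gives $G^*_{wd}(X_t) = \tfrac{1}{2\pi}\arg_d(X_t - w) + \tfrac{\kappa(w)}{2\pi}\log|X_t - w| + O(1)$ uniformly in $t$. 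The combination $\tfrac{1}{2\pi}\arg_d(X_t - w) - N_t$ is, by the very definition of $N_t$ as the jump-compensation of the branch cut, equal up to an additive constant to $\tfrac{1}{2\pi}\Theta^w_t$, where $\Theta^w_t$ is the continuous total winding angle of the path $(X_s)_{s\le t}$ around $w$. Hence
$$M^w_t = \frac{1}{2\pi}\Theta^w_t + \frac{\kappa(w)}{2\pi}\log|X_t - w| + O(1).$$

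The difficulty is that for a single face the winding $\Theta^w_t$ is not controlled, since a transient path may spiral out to infinity. The key step is therefore to difference two faces. I would fix faces $w_1, w_2$ at bounded distance, chosen so that $\Re(\zeta_{w_i}) \neq 0$ and $\kappa(w_1) \neq \kappa(w_2)$; such a pair exists because $\beta/\gamma$ and $\beta/\alpha$ are non-real (the triangle $\Delta$ is non-degenerate), so $\arg \zeta_w$ is not constant $\bmod\,\pi$ as $w$ ranges over faces. Then $D_t := M^{w_1}_t - M^{w_2}_t$ is again a martingale, and
$$D_t = \frac{1}{2\pi}\bigl(\Theta^{w_1}_t - \Theta^{w_2}_t\bigr) + \frac{1}{2\pi}\bigl(\kappa(w_1)\log|X_t-w_1| - \kappa(w_2)\log|X_t - w_2|\bigr) + O(1).$$
Since $\log|X_t - w_1| - \log|X_t - w_2| = O(1/|X_t - w_1|)$, the logarithmic part reduces to $\tfrac{\kappa(w_1) - \kappa(w_2)}{2\pi}\log|X_t - w_1| + O(1)$. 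The crucial cancellation is in the winding: $\Theta^{w_1}_t - \Theta^{w_2}_t$ is the continuous variation of $\arg \tfrac{X_t - w_1}{X_t - w_2}$, whose principal value is bounded and which has its $2\pi$ discontinuity exactly on the segment $[w_1,w_2]$, so this difference changes only when the path crosses that bounded segment.

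To conclude I argue by contradiction. If the walk were transient with positive probability, then on that event $|X_t - w_1| \to \infty$, so the path visits a neighbourhood of $[w_1, w_2]$ only finitely often; hence it crosses $[w_1, w_2]$ finitely often and $\Theta^{w_1}_t - \Theta^{w_2}_t$ stays bounded. Together with $\log|X_t - w_1| \to \infty$ and $\kappa(w_1) \neq \kappa(w_2)$, this forces $D_t \to +\infty$ (or $-\infty$) on the transience event. But a martingale cannot tend to $+\infty$: almost surely, on $\{\liminf_t D_t > -\infty\}$ the limit $\lim_t D_t$ exists and is finite. This contradiction shows the walk is recurrent.

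The main obstacle is precisely the winding analysis of the middle two paragraphs: turning the heuristic that the difference of windings about two nearby faces is bounded into a clean statement valid for all $t$ on the transience event, and checking that the $O(1)$ terms from the $\arg_d$ contribution, the constant $C$, and the $O(1/|\psi(b)-w|)$ error remain uniformly bounded along the trajectory, including when the walk comes close to $w_1$ or $w_2$ (where the asymptotic formula degenerates, but all quantities stay bounded because $|X_t - w_i|$ is bounded below by a positive constant). By contrast, exhibiting the pair $w_1, w_2$ with distinct finite $\kappa$ is elementary given the non-degeneracy of $\Delta$, and the final martingale dichotomy is standard.
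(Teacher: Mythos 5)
Your proposal is correct in substance and shares the paper's central construction: both proofs start from the martingale $G^*_{wd}(X_t) - N_t$, difference two conjugate Green functions at nearby faces $w_1,w_2$ chosen (via non-degeneracy, exactly as you argue) so that the logarithmic coefficients $\kappa(w_1)\neq\kappa(w_2)$, and exploit the fact that the angular contributions cancel while the logarithms do not. Where you genuinely diverge is the bookkeeping and the endgame. The paper takes the two faces \emph{adjacent} across an edge $e$ and takes \emph{nested} branch cuts $d^-\subset d^+$ differing only in that $d^+$ crosses $e$; then the two crossing counts cancel exactly except for the number $V_t$ of traversals of $e$, so $V_t - G^+(X_t)+G^-(X_t)$ is a martingale and no winding analysis is needed at all. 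Optional stopping at the exit time $\tau_R$ of the ball of radius $R$ gives $\E V_{\tau_R} = C\log R + O(1)$, hence the probability of reaching distance $R$ before re-traversing $e$ is $O(1/\log R)$: a quantitative recurrence estimate. You instead keep the two cuts unrelated, convert crossing counts into continuous windings, observe that $\Theta^{w_1}_t-\Theta^{w_2}_t$ moves only when the path crosses the bounded segment $[w_1,w_2]$, and conclude qualitatively by contradiction on the transience event. The paper's nested-cut trick buys a shorter, quantitative proof; your version buys an explicitly geometric mechanism (winding cancellation) and avoids the excursion step, at the cost of the more delicate analysis you yourself flag.

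Two patches are needed to make your endgame airtight. First, the dichotomy you invoke (``on $\{\liminf_t D_t > -\infty\}$ the limit exists and is finite'') is false for general martingales; it requires uniformly bounded increments, applied to the discrete skeleton of $D$ at the jump times of $X$. This does hold here — edges of $T$ have length bounded above, each straight edge crosses a half-line at most once, $\arg_d$ is bounded, and all error terms are uniform because vertices of $T$ stay at a positive distance from the interior points $w_1,w_2$ — but it must be said, since it is exactly what rules out martingales drifting to $+\infty$. Second, your contradiction only shows $\P(|X_t|\to\infty)=0$, which yields the existence of \emph{some} recurrent vertex; to conclude that \emph{every} vertex is recurrent (and indeed to run the argument from an arbitrary starting point) you need irreducibility of the directed T-graph, which the paper handles by citing Lemma 3.23 of \cite{LaslierCLT}. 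Neither point is a flaw in the strategy, but both belong in a complete write-up.
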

\begin{proof}
Let $T$ be a T-graph and let $e$ be an (oriented) edge of $T$ to be chosen appropriately later. 
 Let $w^+$ and $w^-$ denote the triangles on the left and right of $e$. Let $X_t$ denote the random walk started at the end of $e$, let $V_t$ denote the number of jumps along $e$ before time $t$. For $R > 0$, let $\tau_R$ denote the exit time from the ball of radius $R$ centered at $e$. Let $d^+$ be a half line starting in $w^+$, crossing $e$ to go into $w^-$ and avoiding all vertices of $T$. Let $d^-$ be a piece of $d^+$ starting in $w^-$ and going to infinity. for ease of notation, we write $G^+ = G^*_{w^+ d^+}$, $G^- = G^*_{w^- d^-}$.

Note that we can always chose $e$ so that
\[
\frac{ \Im \left(\lambda (\tfrac{\beta}{\gamma})^{m(w^+)} (\tfrac{\beta}{\alpha})^{n(w^+)} \right)}{\Re \left(\lambda (\tfrac{\beta}{\gamma})^{m(w^+)} (\tfrac{\beta}{\alpha})^{n(w^+)} \right)} \neq \frac{ \Im \left(\lambda (\tfrac{\beta}{\gamma})^{m(w^-)} (\tfrac{\beta}{\alpha})^{n(w^-)} \right)}{\Re \left(\lambda (\tfrac{\beta}{\gamma})^{m(w^-)} (\tfrac{\beta}{\alpha})^{n(w^-)} \right)}.
\]
We note that $V_t - G^+(X_t) + G^-(X_t)$ is a martingale. Further, we have 
$G^+(X_{\tau_R}) - G^-(X_{\tau_R}) = C \log R + O(1)$ for some $C$. Therefore 
$\E V_{\tau_R} = C\log R + O(1)$ which implies that the probability of reaching 
distance $R$ before returning to $e$ decays like $O(1/\log R)$. This implies 
recurrence of the endpoint of $e$. Since a nondegenerate T-graph is irreducible (Lemma 3.23 in \cite{LaslierCLT}), the whole graph is recurrent.
\end{proof}

\section{Dimers and UST}\label{sub:bijection}

\subsection{Definition of the mapping}\label{sec:mapping}

In this section we describe the mapping from a forest on $T$ to a perfect 
matching of $\cH$ (a subset of edges of $\cH$ such that every vertex is incident to exactly one edge, i.e., a dimer configuration). This mapping was defined in 
\cite{dimer_tree} actually in a more general setting. We first give the 
construction in a full plane setting before addressing finite domains.

Let $T$ be a fixed T-graph and let $F$ be a spanning forest on $T$. In the context of an oriented graph as here, this means the following: every vertex has a single outgoing edge in $F$ and there is no loop in $F$, even ignoring the orientation. The forest $F$ is not necessarily connected and each connected component of $F$ is called a tree. Trees in $F$ naturally inherit the orientation from the T-graph $T$ and it is easy to see that each tree has a unique branch oriented toward infinity. Let $F^\dagger$ denote the dual of $F$, $F^\dagger$ is also a spanning forest with no finite connected component. For each connected component of $F^\dagger$ we choose one end and orient edges towards that end. Observe that if $F$ is a one-ended tree then $F^\dagger$ is also a one ended tree and there is therefore no arbitrary choice in the orientation of $F^\dagger$.

\begin{defn}\label{def:tree_matching}
Given $F^\dagger$ and its orientation we can construct a dimer configuration as follows. To each white vertex $w$ of $\cH$, is associated a face $\psi(w)$ of $T$. In $F^\dagger$, there is a unique outgoing edge starting in $\psi(w)$. This edge crosses an edge part of a segment of $T$ and by construction this segment corresponds to some black vertex $b \in \cH$ which is adjacent to $w$ in $\cH$ (see \cref{prop:planarity1}). We define this vertex $b$ to be the match of $w$. We denote by $M(F^\dagger)$ the matching constructed by the above procedure. By abuse of notation when $F$ is a one ended tree we write $M(F)$ for $M(F^\dagger)$ with its unique orientation.
\end{defn}

\begin{prop}\label{prop:is_matching}
Let $F$ be a spanning forest of a non-degenerate T-graph and let $F^\dagger$ denote an orientation of its dual. In the above construction, $M(F^\dagger)$ is a perfect matching of $\cH$.
\end{prop}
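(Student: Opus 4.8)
The plan is to verify separately that every white vertex and every black vertex of $\cH$ is covered exactly once by $M(F^\dagger)$. For the white vertices there is nothing to do beyond unwinding the definition: each face $\psi(w)$ of $T$ is a single vertex of the dual forest $F^\dagger$, and since every component of $F^\dagger$ is oriented toward a chosen end, it has exactly one outgoing edge there; this edge crosses a unique edge of $T$ lying on a unique segment $\psi(b)$, so $w$ receives exactly one match $b$, and $b$ is a neighbour of $w$ by \cref{prop:planarity1}. Thus $M(F^\dagger)$ covers each white vertex exactly once and consists of genuine edges of $\cH$. The whole content is therefore to show that each black vertex is covered exactly once, i.e.\ that for each segment $S=\psi(b)$ exactly one of the adjacent faces sends its outgoing $F^\dagger$-edge across $S$.

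The key is a local analysis of $S$ using the directed structure of $T$. First I would record, from the jump rates in the definition of the walk, that every vertex of $T$ has exactly two outgoing edges, both lying on the unique segment containing it in its interior and pointing to its two endpoints. Now $S$ is the image of a single black face (a triangle), and flattening a triangle onto a line sends exactly one of its corners strictly into the interior; by \cref{prop:prop_geometrique:non_intersection} no vertex of another segment lies in the interior of $S$, so $S$ has a unique interior vertex $B$ and two endpoints $A,C$, and is split into exactly two edges $[A,B]$ and $[B,C]$ of $T$, both oriented out of $B$. The two outgoing edges of $B$ are therefore precisely these two halves, while the outgoing edges of $A$ and $C$ lie on the other segments having $A$, resp.\ $C$, in their interior (cf.\ \cref{prop:prop_geometrique:endpoints}), hence not on $S$. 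Consequently exactly one of $[A,B],[B,C]$ — the one chosen as the outgoing edge of $B$ — lies in $F$, and the other does not.

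Finally I would pass to the dual. Since $F^\dagger$ is the dual forest, a half-edge of $S$ has its dual in $F^\dagger$ precisely when that half-edge is not in $F$; by the previous step exactly one half of $S$, say $e$, is not in $F$, so $e^\dagger$ is the unique edge of $F^\dagger$ crossing $S$. This $e^\dagger$ joins the two faces adjacent across $e$ and, being a single oriented edge of $F^\dagger$, is outgoing from exactly one of them; that face is then the unique white vertex whose outgoing $F^\dagger$-edge crosses $S$, and it is matched to $b$. Hence $b$ is covered exactly once, which together with the white count shows $M(F^\dagger)$ is a perfect matching. I expect the main obstacle to be the bookkeeping of the local structure of a segment — namely establishing that its two half-edges are exactly the two outgoing edges of its interior vertex, so that precisely one of them lies in $F$ — since this is what makes the dual count come out to exactly one independently of the arbitrary orientation chosen on $F^\dagger$.
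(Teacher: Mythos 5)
Your proof is correct and is essentially the paper's own argument: white vertices are matched exactly once by construction of the orientation of $F^\dagger$, and for a black vertex $b$ the two halves of the segment $\psi(b)$ are precisely the two outgoing edges of its unique interior vertex, so exactly one of them lies in $F$, hence exactly one edge of $F^\dagger$ crosses $\psi(b)$, and its single orientation selects the unique white vertex matched to $b$. One caveat: the uniqueness of the interior vertex does not follow from point (vii) of \cref{prop:prop_geometrique} as you claim --- that statement still permits endpoints of \emph{other} segments to lie in the interior of $\psi(b)$, and indeed two such endpoints sit exactly at your vertex $B$ --- rather it follows from the standing non-degeneracy assumption that every vertex of $T$ belongs to exactly three segments (point (v)), which forces every vertex of $T$ lying on $\psi(b)$ to be the image of a corner of the black face $b$, of which exactly one lands in the interior.
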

\begin{proof}
It is clear that every white vertex is matched by construction to a single 
black vertex so we only need to check that no black vertex is matched with two 
white vertices. Let $b$ be a black vertex and let $v \in T$ be the unique vertex 
in the interior of the segment $\psi(b)$ associated to $b$. There is exactly one 
outgoing edge from $v$ in $F$ so $\psi(b)$ is crossed by exactly one edge in 
$F^\dagger$ which of course has only one orientation. By construction this 
implies that $b$ is matched to only one white vertex.
\end{proof}


The main quantity of interest in a dimer configuration is often the height function, which completely encodes the dimer configuration. We will also explain below how it can be related to the winding of branches in the associated forest $F$. 
However, recall from \cref{sec:height_intro} that the definition of the height involves an arbitrary choice of a ``reference flow''; we therefore first describe how to choose this flow in order to have an exact identity. 

\begin{figure}
\begin{center}
\includegraphics[width=.9\textwidth]{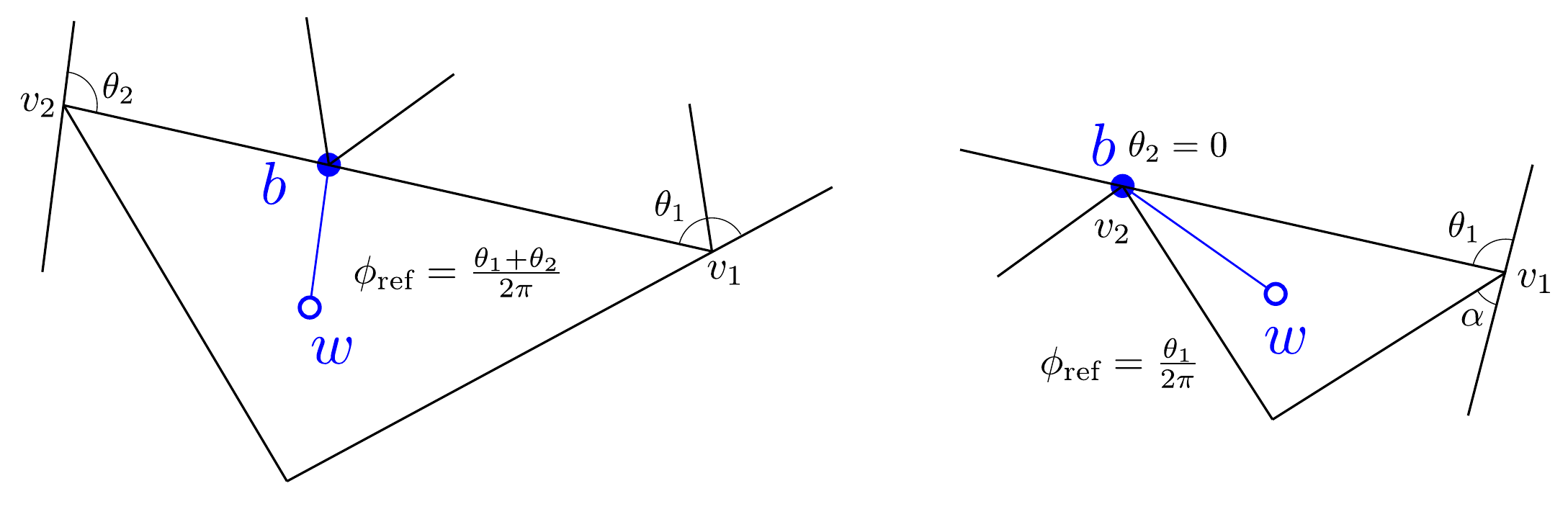}
\caption{The reference flow associated to a T-graph. The left hand side shows a case where $S_b$ is different from $S_1$ and $S_2$ while the right hand side shows a case with $S_2 = S_b$. Note that the local structures at $v_1$ and $v_2$ can be of two possible types as seen in the left hand side. In particular the angle $\alpha$ in the right hand side could be equal to $0$.}\label{fig:def_reference}
\end{center}
\end{figure}

The right choice of reference flow was introduced in \cite{KenyonHex} and is 
defined geometrically as in \cref{fig:def_reference}. Let $w$ and $b$ be 
adjacent white and black vertices. Let $v_1$ and $v_2$ be the two vertices of 
$T$ on both sides of $\psi(w) \cap \psi(b)$ and let $S_1$ and $S_2$ be the two 
segments containing $v_1$ and $v_2$ in their interior respectively (note that 
one of them might be $\psi(b)$). The flow from $w$ to $b$ is defined to be 
$\phi_{\text{ref}}(wb)=\frac{\theta_1 + \theta_2}{2\pi}$ where $\theta_1$ (resp. 
$\theta_2$) is the angle between $\psi(b)$ and $S_1$ (resp. $S_2$) measured 
opposite to $\psi(w)$ and without sign. 

\def\phiref{\phi_{\text{ref}}}

\begin{prop}\label{P:ref}
The reference flow satisfies for every white vertex $w \in \cH$, $\sum_{b \sim w} \phiref(wb) = 1$, and for every black vertex $b \in \cH$, $\sum_{w \sim b} \phiref(bw) = -1$.
\end{prop}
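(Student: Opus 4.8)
The plan is to prove each identity as a statement about a sum of angles. Since $\phiref(bw) = -\phiref(wb)$ by the antisymmetry of the reference flow, it suffices to show $\sum_{b \sim w}\phiref(wb) = 1$ for every white vertex $w$ and $\sum_{w \sim b}\phiref(wb) = 1$ for every black vertex $b$; recalling that $\phiref(wb) = (\theta_1+\theta_2)/2\pi$, both statements reduce to showing that the relevant angles sum to $2\pi$. The essential geometric input, which I would first extract from \cref{prop:prop_geometrique} and the local picture in \cref{fig:image_locale}, is the description of a T-graph vertex $v$: exactly three segments are incident to $v$, it lies in the interior of one of them (the ``through-segment'') and is an endpoint of the other two, and the three white triangles having a corner at $v$ all lie in the same open half-plane bounded by the through-segment, with the two ending segments also contained in that half-plane.

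First I would treat the white vertex $w$, whose image $\psi(w)$ is a triangle with corners $P_1,P_2,P_3$, each of its three sides lying on the segment $\psi(b_i)$ of one of the three black neighbours $b_i$. The sum $\sum_{b\sim w}(\theta_1+\theta_2)$ is then a sum over the three corners, each contributing the two angles attached to the two sides of the triangle meeting there. The key computation is purely local at a corner $P$: placing the through-segment at $P$ along a horizontal line and the two ending segments as rays into the upper half-plane, I would check, in the three possible positions of $\psi(w)$ among the three triangles at $P$ — including the case where one side of $\psi(w)$ lies on the through-segment itself, so that $S=\psi(b)$ and the corresponding angle is $0$ — that the two side-angles at $P$ always sum to $\pi$ minus the interior angle $A_P$ of $\psi(w)$ at $P$. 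Summing over the three corners and using $A_{P_1}+A_{P_2}+A_{P_3}=\pi$ gives $\sum_{b\sim w}(\theta_1+\theta_2)=3\pi-\pi=2\pi$, hence $\sum_{b\sim w}\phiref(wb)=1$.

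Next I would treat the black vertex $b$, whose image $\psi(b)$ is a segment carrying exactly three T-graph vertices: its two endpoints $p_1,p_2$ and the interior vertex $b$ itself. The three white neighbours are the triangle $W_0$ sitting on one side of $\psi(b)$ with $\psi(b)$ as a full edge (so $\psi(W_0)\cap\psi(b)$ has endpoints $p_1,p_2$), and the two triangles $W_1,W_2$ on the other side sharing the edges $[p_1,b]$ and $[b,p_2]$. At the interior vertex $b$ the through-segment is $\psi(b)$ itself, so the angles contributed there by $W_1$ and $W_2$ both vanish. At each endpoint $p_i$, two of the three white neighbours meet ($W_0$ together with $W_1$ or $W_2$) on opposite sides of the ray $\psi(b)$, and each measures its angle, opposite to itself, against the same through-segment at $p_i$; since $\psi(b)$ lies strictly inside the half-plane bounded by that through-segment, these two angles sweep the half-plane and sum to $\pi$. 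Thus the total is $\pi+\pi+0=2\pi$, giving $\sum_{w\sim b}\phiref(wb)=1$ and hence $\sum_{w\sim b}\phiref(bw)=-1$.

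The main obstacle is not any single computation but the careful bookkeeping of the local configuration: one must correctly identify the through-segment at each vertex, handle uniformly the cases where the shared edge lies on the through-segment (forcing a vanishing angle) versus on an ending segment, and interpret ``measured opposite to $\psi(w)$ and without sign'' so that the two supplementary choices of angle are resolved consistently. Once the local half-plane structure at a vertex is established, each of the two statements collapses to the elementary facts that the angles of a triangle sum to $\pi$ and that a straight angle equals $\pi$.
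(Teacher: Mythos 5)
Your proof is correct and takes essentially the same approach as the paper: the paper's entire proof is a reference to \cref{fig:ref_flow}, which depicts exactly the two local angle computations you carry out (at a white vertex, the corner contributions summing to $\pi$ minus the triangle's interior angle, hence $3\pi-\pi=2\pi$ in total; at a black vertex, two straight angles of $\pi$ at the endpoints of the segment and vanishing angles at its interior vertex). Your write-up simply makes this picture-proof explicit, including the degenerate cases where an angle is $0$, which the paper also flags.
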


The proof of the proposition follows from \cref{fig:ref_flow} which illustrates the generic situation for a face in a T-graph, note that some of the angles however may be equal to zero. \cref{P:ref} says that the divergence of $\phiref$ is $\pm 1$ at every white/black vertex; as we will see in a moment, this property allows us to define a function for a dimer configuration unambiguously and hence we speak of a \emph{reference flow}.

\begin{figure}
\begin{center}
\includegraphics[width=.7\textwidth]{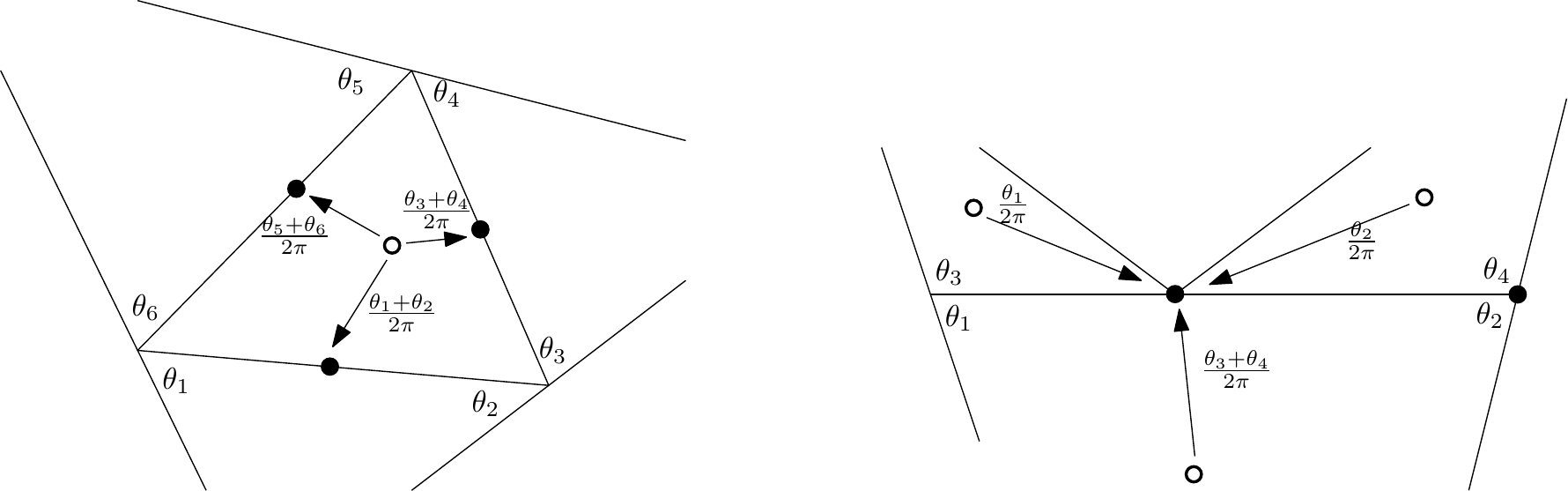}
\caption{An illustration of why $\phi_{\text{ref}}$ is a 
valid reference flow. In the left figure, some of the 
angles might be $0$.}\label{fig:ref_flow}
\end{center}
\end{figure}



Let $M$ be a dimer configuration on $\cH$ (seen as a subset of edges) and recall the definition of the height function: We define a flow $\phi_M$ by $\phi_M(wb) = - \phi_M( bw) = 1$ if $(wb) \in M$ and 0 otherwise. Then $\phi_M$ is also a flow and satisfies the same property as $\phiref $ in \cref{P:ref}. Hence $\phi_M - \phiref$ is a divergence free flow. Consequently there exists a unique (up to constant) function $h$ defined on the vertices of $\cH^\dagger$ such that if $e = bw$ is an edge of $\cH$ and $e^\dagger = xy$ is the corresponding dual oriented edge in $\cH^\dagger$ then
$$
(\phi_M  - \phiref)(wb) = h(y) - h(x).
$$
$h$ is called the \textbf{height function} of the (infinite) dimer configuration $M$.

\medskip The main result of this section relates the height differences to winding of branches when the dimer configuration derives from a one-ended spanning tree. Let $F$ be a spanning tree of $T$. Let $v,v'$ be two vertices of $\cH^\dagger$. Let $\gamma_{v\to v'}$ be the unique path in $F$ joining $v$ and $v'$. That is, $\gamma_{v \to v'} = (\gamma_0, \gamma_1, \ldots, \gamma_n)$ where $\gamma_0 = v$ and $\gamma_n = v'$ and the unoriented edge $\{ \gamma_i, \gamma_{i+1}\} \in T$. To this we add two vertices $\gamma_{-1}$ and $\gamma_{n+1}$ as follows: let $S_0$ (resp. $S_n$) be the segment of which $\gamma_0$ (resp $\gamma_n$) is the midpoint.
We require $(\gamma_{-1}, \gamma_0)$ to be perpendicular to $S_0$ and $\gamma_{-1}$ is on the side of $S_0$ containing the two incoming edges to $\gamma_0$. Likewise, we require $(\gamma_n, \gamma_{n+1})$ to be perpendicular to $S_n$ and $\gamma_{n+1}$ is on the side of $S_n$ not containing the incoming edges to $\gamma_n$. See \cref{fig:thetav} for an illustration.
  
\begin{thm}\label{lem:heightwinding}
Let $T$ be a non degenerate T-graph and let $F$ be a one ended spanning tree of 
$T$. Let $h$ denote the height function of $M(F)$ with the reference flow 
$\phi_{\text{ref}}$ defined above. We have for any $v, v' \in \cH^\dagger$,
\[
	h(v')  - h(v) =\frac{1}{2\pi} W_{\i} (\gamma_{v \to v'}) 
\]
where $\gamma_{v \to v'}$ is the path in $F$ joining $v$ and $v'$ as defined above, and
 $W_{\i}$ denotes the sum of the angles turned (with 
signs) from the initial point $\gamma_{-1}$ to final point $\gamma_{n+1}$ of the path.
\end{thm}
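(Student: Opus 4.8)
The plan is to turn the identity into a telescoping sum along the path and then verify a purely local geometric identity, with the global statement following by induction on the length of the path.

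First I would express both sides as sums over the edges and vertices of $\gamma_{v\to v'}$. The height function $h$ is defined on the vertices of $\cH^\dagger$, which by \cref{prop:prop_geometrique} are exactly the vertices of $T$; and each edge $\{\gamma_i,\gamma_{i+1}\}$ of the tree is an edge of $\cH^\dagger$ lying on a unique segment $\psi(b_i)$, hence dual to a unique primal edge $b_iw_i$ of $\cH$ (here $w_i$ is the white vertex whose face lies across the segment, see \cref{prop:planarity1}). The defining relation of $h$ then telescopes to
\[
h(v')-h(v)=\sum_{i=0}^{n-1}\epsilon_i\,(\phi_M-\phi_{\text{ref}})(w_ib_i),
\]
where $\epsilon_i=\pm1$ records the orientation of the dual edge. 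On the other side the winding is a sum of signed turning angles $W_{\i}(\gamma_{v\to v'})=\sum_{i=0}^n\tau_i$, the turns at the two ends being fixed by the auxiliary perpendicular edges $(\gamma_{-1},\gamma_0)$ and $(\gamma_n,\gamma_{n+1})$. It therefore suffices to match these two sums.

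Second, I would induct on $n$, building the path one edge at a time. Write $P_k$ for the sub-path from $\gamma_0$ to $\gamma_k$ together with its two auxiliary perpendicular edges (the terminal one perpendicular to the segment through $\gamma_k$). For the base case $n=0$ the path is empty, $h(v')-h(v)=0$, and the two auxiliary edges at $\gamma_0$ are perpendicular to the same segment $S_0$ on opposite sides, hence parallel, so the total turning is $0$ as well. For the inductive step, appending the edge $\{\gamma_k,\gamma_{k+1}\}$ increases the height by $\epsilon_k(\phi_M-\phi_{\text{ref}})(w_kb_k)$, while it changes the winding by replacing the perpendicular-exit turn at $\gamma_k$ by the genuine turn there and by introducing the new perpendicular-exit turn at $\gamma_{k+1}$. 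The entire theorem thus reduces to the single local identity
\[
\epsilon_k\,(\phi_M-\phi_{\text{ref}})(w_kb_k)=\frac{1}{2\pi}\bigl(\tau_k^{\mathrm{gen}}-\tau_k^{\perp}+\tau_{k+1}^{\perp}\bigr),
\]
to be checked at the edge $\{\gamma_k,\gamma_{k+1}\}$ for every local configuration. I would prove it by a case analysis of the three segments meeting at $\gamma_k$ and at $\gamma_{k+1}$, as in \cref{fig:def_reference}. The two angles $\theta_1,\theta_2$ defining $\phi_{\text{ref}}(w_kb_k)=(\theta_1+\theta_2)/2\pi$ are precisely the angles between $\psi(b_k)$ and the two segments abutting the endpoints of $\psi(w_k)\cap\psi(b_k)$, measured away from $\psi(w_k)$; these are exactly the angles that convert the perpendicular reference directions into the genuine turning directions, which is why this is the correct reference flow of \cite{KenyonHex}. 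Thus the $\phi_{\text{ref}}$-part of the increment matches the geometric turning up to the contribution of the dimer indicator.

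The step I expect to be the main obstacle is accounting for the dimer indicator $\phi_M(w_kb_k)=\mathbbm{1}_{\{w_kb_k\in M\}}$, i.e. the extra $+1$ in the flow difference (a $2\pi$ contribution to the winding) that appears exactly on matched edges. Here I would use \cref{def:tree_matching}: $w_k$ is matched to $b_k$ if and only if the unique outgoing edge of the dual forest $F^\dagger$ from the face $\psi(w_k)$ crosses the segment $\psi(b_k)$. This is a local condition that can be read off from the direction in which the path traverses $\{\gamma_k,\gamma_{k+1}\}$ relative to the orientations of $F$ and $F^\dagger$ (the latter unambiguous because $F$ is one-ended, \cref{def:tree_matching}). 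Showing that this combinatorial condition coincides, case by case, with the local configuration that forces the extra $2\pi$ of turning past the perpendicular reference direction — while keeping the signs $\epsilon_k$ and the orientation conventions consistent throughout — is where essentially all the work lies.
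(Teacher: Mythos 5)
Your reduction---telescoping the defining relation of $h$ over the edges of $\gamma_{v\to v'}$ and inducting down to a single local identity per tree edge---is essentially the same first step as the paper's proof (which phrases it as additivity and antisymmetry of $W_{\i}$, reducing to a single oriented tree edge). But your proposal stops exactly where the actual proof happens: you defer the treatment of the dimer indicator $\phi_M(w_kb_k)$ as ``the main obstacle'' and ``where essentially all the work lies,'' anticipating a case analysis that matches matched edges to an extra $2\pi$ of turning. That step is left open, so the proof is incomplete; moreover, the anticipated case analysis rests on a false premise, because matched edges never occur along the path at all.

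The missing idea is a one-line consequence of the duality between $F$ and $F^\dagger$. Each traversed edge $\{\gamma_k,\gamma_{k+1}\}$ lies in $F$ and is exactly the common boundary $\psi(w_k)\cap\psi(b_k)$ of the face and the segment it sits on. By \cref{def:tree_matching}, $w_k$ is matched to $b_k$ precisely when the outgoing edge of $F^\dagger$ from the face $\psi(w_k)$ crosses the segment $\psi(b_k)$; but such a dual edge would have to cross $\psi(w_k)\cap\psi(b_k)$, which is an edge of $F$, and dual forest edges cross only non-tree edges. Hence $w_k$ is \emph{never} matched to $b_k$, so $\phi_M(w_kb_k)=0$ for every $k$, and the height increment along each edge of the path is just $\mp\phi_{\text{ref}}(w_kb_k)$. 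What remains is then purely geometric: $\phi_{\text{ref}}(w_kb_k)=(\theta_1+\theta_2)/2\pi$, where one of the two angles vanishes (it is the angle between $\psi(b_k)$ and the segment containing the segment's own midpoint in its interior, i.e.\ $\psi(b_k)$ itself), and the other equals the turning angle of the path at the segment's endpoint, as in \cref{fig:thetav}; after that only the sign bookkeeping (orientation preservation of the white triangles) is left. Without this observation your induction cannot proceed, since you have no way to evaluate the left-hand side of your local identity, and a case analysis hunting for $2\pi$ jumps of winding on ``matched'' edges would never terminate in a proof---there is no configuration realizing that case.
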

\begin{proof}
First note that with this definition $W_{\i}(\gamma_{v \to v'})$ is additive and antisymmetric (in both cases this is not completely obvious because of the beginning and end portions added to the path); see  \cref{fig:thetav}.
By additivity, we can restrict ourself to the case
where $\psi(v)$ and $\psi(v')$ are joined by a single edge in the
tree. Furthermore by antisymmetry we can assume without loss of 
generality that the edge in
the tree is oriented from $\psi(v)$ to $\psi(v')$ as in the left hand side of \cref{fig:thetav}.

\begin{figure}
\begin{center}
\includegraphics[width=.4\textwidth]{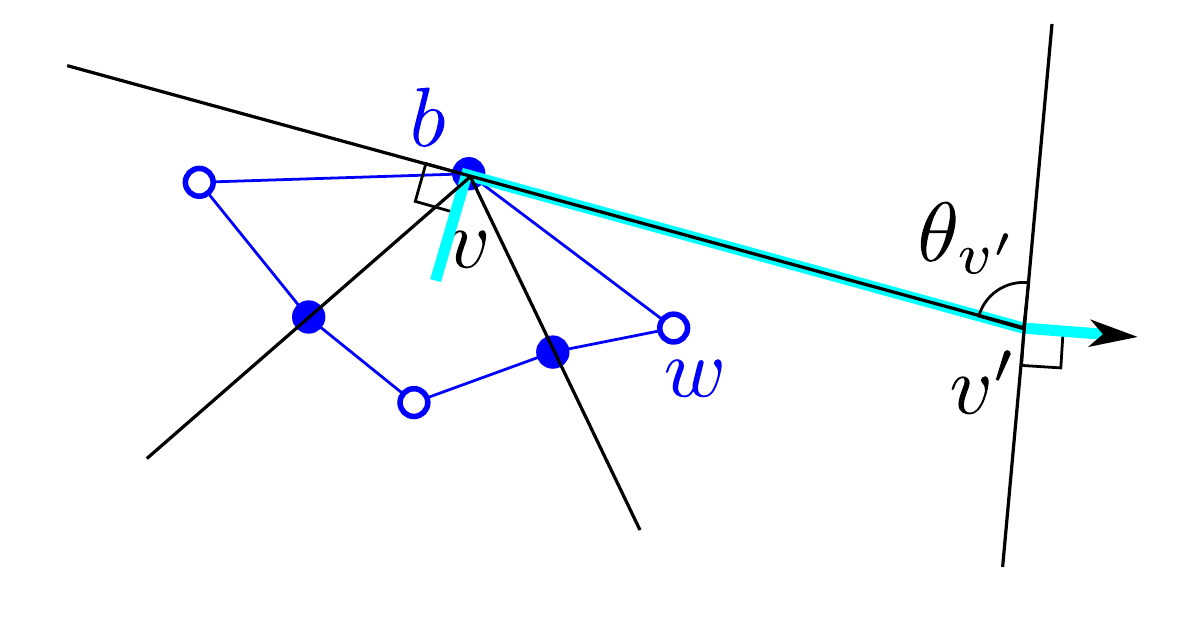}
\includegraphics[width=.4\textwidth]{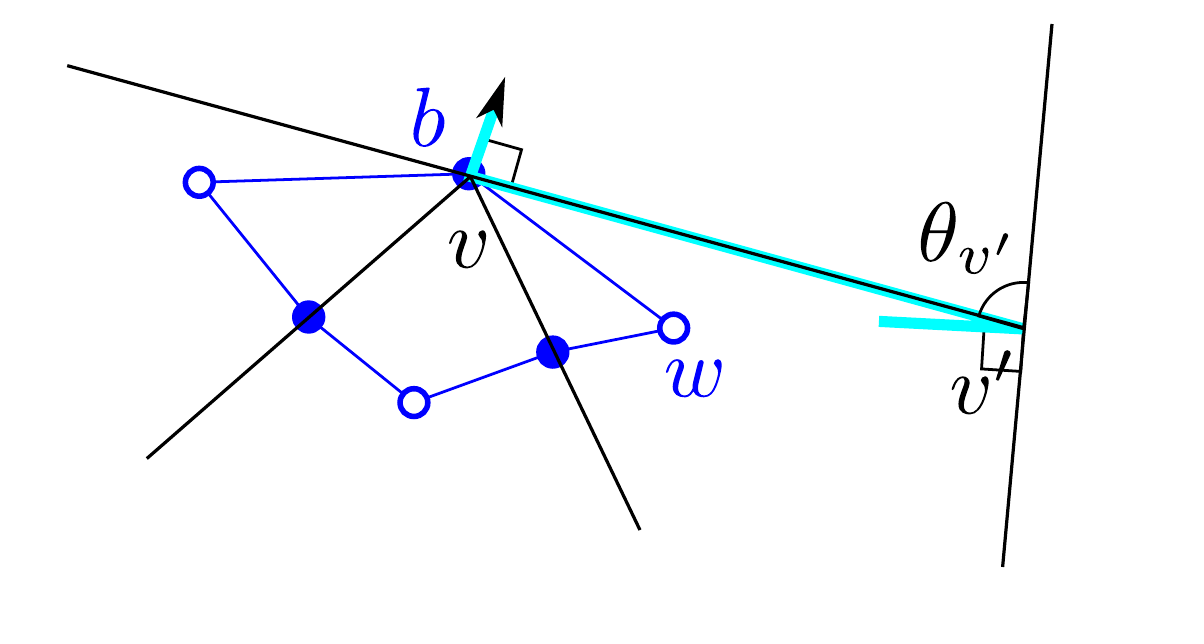}
\caption{Illustration of the proof of \cref{lem:heightwinding}. The hexagonal lattice is shown in blue. Left: The cyan path is $\gamma_{v \to v'}$ and in this case its intrinsic winding is clearly $-\frac{\pi}2-\theta_{v'}+\frac\pi 2 = -\theta_{v'}$. Right: The cyan path is $\gamma_{v' \to v}$ and the winding is $\theta_{v'}$.}
\label{fig:thetav}
\end{center}
\end{figure}

Let $b$ be the black vertex associated to $v$. Since there is an edge from 
$\psi(v)$ to $\psi(v')$ (which we have assumed is oriented from $\psi(v)$ to $\psi(v')$), it must be that $\psi(v')$ is an endpoint of $\psi(b)$. So $v$ and $v'$ 
are neighbours in $\cH^\dagger$ (this is obvious if we think of the segment as a flattened triangle, where the edge between the midpoint and the extremity is just one of the edges of the triangle). Let $w$ be the white vertex of the edge $e$ of 
$\cH$ which is dual to the edge between $v$ and $v'$. Note that the black vertex 
of $e$ is $b$. From the construction of the T-graph, we see easily that 
$\psi(w) \cap \psi(b)$ is the edge $[\psi(v), \psi(v')]$ of $T$. Since that edge 
is in $F$, the outgoing edge  from $\psi(w)$ in $F^\dagger$ does not cross 
$\psi(b)$. Hence $b$ and $w$ are not matched together and
\[
h(v') - h(v) =\pm  \phiref(wb)
%
\]
Recall that $\abs{ \phi_{\text{ref}}(wb) }= \frac{\theta_v + \theta_{v'}}{2\pi}$ where $\theta_v$ and $\theta_{v'}$ are angles at $v$ and $v'$. However by definition $b$ is the black vertex associated to $v$ so $\theta_v=0$. Also, it is immediate from \cref{fig:thetav} that $\abs{W_{\i} (\gamma_{v \to v'})} = \theta_{v'}$. It remains only to check that the signs are consistent with our conventions. For this, note that since the orientation of black triangles are preserved, if the tree edge $[\psi(v), \psi(v')]$ goes to the right (resp. left) then the edge $vv'$ is crossed with the white vertex to its right (resp. left). Therefore the sign of $\phi_{\text{ref}}(vv')$ is determined by this turn and it is easy to check that it is opposite to the sign of $W_{\i} (\gamma_{v \to v'})$.
\end{proof}

\begin{remark}
Note that we add two auxilary segments at the initial and final points of the paths (the segments $(\gamma_{-1},\gamma_0)$ and $(\gamma_n,\gamma_{n+1})$) which has a contribution to the winding which we did not include in the definition of Theorem 5.1 in \cite{BLR16}. However, we can add this to term $m^{\# \d}$ there since this extra winding is strictly a function of the point in the $T$-graph and not of the uniform spanning tree.
\end{remark}

\begin{remark}
There exists a similar construction of T-graphs for dimers on the square lattice 
and actually even any periodic bipartite planar graph and 
a version of \cref{lem:heightwinding} holds in that more general setting (see 
\cite{dimer_tree}). We do not work in this more general setup because 
the Central Limit Theorem of \cite{LaslierCLT}, which is needed for the convergence of the height function to the Gaussian free field in \cite{BLR16} is currently only proved for the hexagonal lattice (although the proof would extend to the square lattice without major difficulties).
\end{remark}

\begin{remark}\label{rem:O1}
The reference flow $\phi_{\text{ref}}$ is essentially equivalent to the flow $\phi_{p_a p_b p_c}$ defined as $\phi_{p_a p_b p_c}(wb) = p_a$ (resp. $p_b$, $p_c$) if $wb$ is a vertical (resp. NE-SW, NW-SE) edge. Indeed it follows from \cite{KenyonHex} Section 3.2.3 that if $h$ and $h'$ are the two heights functions associated to the same tiling but with references $\phi_{\text{ref}}$ or $\phi_{p_a p_b p_c}$ then $\abs{h - h'} \leq C$ for some universal constant $C$.
\end{remark}

%
%

\subsection{Dimer configurations on finite domains}\label{sec:domain_construction}
\def \inte {\text{int}}
\def \ext {\text{ext}}
 
In the previous section we explained how to associate to a spanning tree of an (infinite) T-graph a dimer configuration on the whole hexagonal lattice. We now explain how to extend this construction to spanning trees of finite subgraphs of a T-graph for a given finite domain of the hexagonal lattice. This will require choosing the boundary of the discrete domain on the T-graph in a careful way. In this section, we assume that we are given a continuous domain and that we want to approximate it by a``good" subgraph of the hexagonal lattice.

Let $U \subset \C$ be a bounded domain such that $\partial U$ is locally connected.  
Let $x \in \partial U$ be a marked point on the boundary. Consider an (infinite) T-graph corresponding to probabilities $p_a,p_b,p_c$ (chosen as above) and call it $T$. Let $\ell$ be the linear 
map as described in \cref{prop:prop_geometrique}, we recall that the map $\psi$ 
from $\cH^\dagger$ to $T$ is almost equal to $\ell$.

We now describe how to construct 
subdomains $U^\delta$ of the scaled lattice $\delta \cH$  together with a marked point $x^\delta$ which approximate $(U, x)$ in the sense that the boundary of $U^\d$ approximates $\partial U$ as closed sets and that they have a marked boundary face $x^\delta$ such that $x^\delta \to x$. They also satisfy the following properties.\ Firstly, when viewed as a lozenge tiling of a subdomain (and thus equivalently as a stack of cubes in $3$-d), the cubes on the boundary will be within $O(1)$ of some fixed plane $\cP$. Another way of putting it is that if $p_a, p_b, p_c$ are such that $(p_a, p_b, p_c)$ is normal to the plane $\cP$ then the height function of any dimer configuration on $U^\delta$, measured with respect to the flow $\phi_{p_a, p_b, p_c}$, will be within $O(1)$ from $\cP$ along the boundary.
Actually using \cref{lem:heightwinding}, the height differences with respect to $x^\delta$ along the boundary are also given by the winding of the boundary of $U^\delta$. Secondly, the dimer configuration will correspond to a wired UST by a finite analogue of the construction introduced in \cref{sec:mapping}.

 In what follows $o_\delta(1)$ denotes a function which goes to $0$ as $\delta \to 0$.


Write $\tilde U $ for $\ell (U)$ and fix a conformal map $\ph: \D \mapsto  \tilde U$.
We recall that $\ph$ extends continuously to $\bar \D$ if and only if $\partial \tilde U$ is locally connected, by Theorem 2.1 of \cite{Pommerenke}. We will assume that $U$ and thus $ \tilde U$ has locally connected boundary throughout this section. Hence $\partial U$ and $\partial \tilde U$ can each be seen as a curve\footnote{i.e. as a possibly non-injective continuous mapping $\gamma: \mathbb S_1 \mapsto \C$}. Consider  $ \tilde U_\ve:=  \ph((1-\ve)  \D)$ (note that $\tilde U_\eps \subset \tilde U$). Using the above information, we deduce that $\partial \tilde U_\ve \to \partial \tilde U$ in uniform norm between curves up to reparametrisation\footnote{In a non-locally connected boundary, like the topologist's comb, this convergence might fail.}.
\medskip

Now we claim that one can find a sequence of directed simple loops in the scaled T-graph $\delta T$ such that they converge to $\partial  \tilde U$ in the uniform norm up to reparametrisation. To see this, consider the image of $T^\d:= \delta T \cap \tilde U$ under $\ph^{-1}$. We will first find a path (in fact, an oriented loop) on $ \ph^{-1}(T^\delta) \subset \D$ such that the path will remain at distances between $1- 2\eps$ and $1-\eps/2$ from the origin.  
Indeed, we can approximate the circle $(1-\ve)\partial \D$ by a sequence of rectangles of dimension $\ve/10 \times 3\ve/10$ lying strictly inside $(1-\ve/2)\partial \D$ such that the starting ball of one rectangle is the target ball of another one. We can use the crossing assumption on $T$ and the bound on the derivative of $|\ph'|$ to say that these rectangles also satisfy the crossing assumption on the graph $\ph^{-1}(T^\delta)$ for small enough $\delta(\ve)$. Indeed, for each such rectangle $R$ with starting ball $B_1$ and target ball $B_2$, one can find a sequence of rectangles $R_1,\ldots, R_k \subset \ph(R)$ in $\tilde U$ each with aspect ratio $3:1$ such that the starting ball of $R_1$ contains $\ph(B_1)$ and the target ball of $R_k$ contains $\ph(B_2)$ and the starting ball of $R_i$ is the target ball of $R_{i-1}$ for $i=1,\ldots,k$. Observe that the size of $R_i$ can be chosen to be at least $c(\ve)$ where $c(\ve)>0 $ depends only on $\eps$ and $U$ (in fact, on the minimal value of $|\ph'|$ in $(1-\ve/4)\D $). Using the crossing estimate, for small enough $\delta=\delta(\ve)$, there exists at least one path crossing each rectangle $R_i, 1\le i \le k$. These paths can be concatenated. 
Consequently we obtain a loop in $T^\d$ which is formed by concatenation of paths lying inside the images of the rectangles in $\D$. We can assume that this path is a directed simple loop by loop-erasing and note that by construction, this still keeps some portion of the loop in $\ph(R)$ for each rectangle $R$ in $\D$. Call this loop $C^\d$. Note that $\ph^{-1}(C^\d)$ is a simple loop in $\D$ which lies between $(1-\ve/2)\partial \D$ and $(1-2\ve)\partial \D$ and hence $C^\d \to \partial \tilde U$ in the uniform norm when parametrised by the argument as $\ve \to 0$ since $\ph$ extends continuously to $\partial \D$. 

 Our second claim is that we can find a simple path $\tilde P^\d$ starting from a point on $C^\d$ within $o_\delta(1)$ distance of $\ell(x)$, and such that $\tilde P^\d$ goes off to infinity avoiding $C^\d$ except for the starting point. To see this the crucial point again is that we have a bound of the derivative of $\ph$ on a neighbourhood of $C^\delta$. 
 
 Let us give a detailed explanation. We move back to the unit disc and assume without loss of generality that $\ph^{-1}(x)$ is $1$. From the bound on the derivative, note that every point in $\ph(\D) = \tilde U$ outside $\tilde U_{1- \eps/100} = \ph((1-\ve/100)\D)$ is at least $c_\ve$ away from $C^\d$. Furthermore, consider the rectangle $R = [1-10\ve,1-\ve/200) \times [-\ve/200,\ve/200] \subset \D $. By planarity and the crossing estimate there exists a simple path $p \subset \ph(R)$ in the graph $T^\delta$, connecting a vertex in $C^\d \cap \ph(R)$ to a vertex  outside $\ph((1-\ve/100)\D)$.
 
   The curve $C^\delta$ is Jordan and therefore defines an inside $I^\delta$ and an outside. Call $y^\delta$ the end point of the path $p$ constructed above. Note that since $y^\delta \in \C \setminus I^\delta$ we can find a continuous path $\gamma$ which connects $y^\delta$ to infinity, avoiding $\bar I^\delta$. By compactness, note that $\gamma$ is at distance at least $c'_\eps$ from $\bar I^\delta$. Thus using a suitable sequence of rectangles of size $c'_\eps/2$ and using the crossing estimate, we can find an infinite oriented simple  path $p'$ in $T^\d$ which starts from $y^\d$ and avoids $C^\d$. We define $\tilde P^\d$ to be the portion of the path $p \cup p'$ from the last time it crosses $C^\d$. The point where $p \cup p'$ crosses $C^\d$ for the last time is within $o_\ve(1)$ of the starting point of $p$ by the argument in the previous paragraph and hence within $o_\delta(1)$ of $x$. 

\medskip
We root $C^\d$ at the vertex $x^\delta$. Recall that $C^\delta$ is a rooted oriented cycle. 
Let 
$P^\delta$ be the path obtained from $C^\d$ deleting the first edge of $C^\delta$ and concatenating with $\tilde P^\delta$. Thus $P^\delta$ is a simple oriented path going to infinity (which roughly speaking starts by looping around $\tilde U$ and then follows $\tilde P^\delta$). 
Call the erased edge 
  $e^\d$.  Since each vertex of the T-graph corresponds to a face of the hexagonal lattice $\cH^\dagger$, $\psi^{-1} 
(C^\d)$ is a self avoiding cycle in $\delta \cH^\dagger$ by \cref{prop:planarity2}. We define $\tilde U^\d$ 
to be the subgraph of $\delta \cH$ spanned by vertices strictly inside that 
cycle. The dual edge of $\psi^{-1}(e^\d)$ is an edge of $\delta \cH$ with 
exactly one of its endpoints in $\tilde U^\d$ and we define $U^\d$ by removing this 
vertex from $\tilde U^\d$. Define the boundary faces of $U^\d$ to be the faces 
in $\psi^{-1}(C^\d)$.
(Note that it may happen that $x^\delta$, although a boundary face of $U^\delta$ in this definition, is not actually adjacent to $U^\delta$ after the vertex removal, but is always within $O(\delta)$ from $U^\delta$, see \cref{fig:domain}) 

\medskip 
Now the idea will be to use the connection between dimers and trees in the whole plane and to that end we will extend $P^\d$ to a one ended tree in the full plane.

Let 
$D^\delta$ be the sub-graph of $\delta T$ consisting of all the vertices 
which are strictly inside $C^\delta$ and all the outgoing edges from 
them (including their endpoints). Clearly, every vertex in $D^\delta$ has outdegree $0$ or $2$. Again, 
using the crossing estimate, $P^\delta$ can be extended to a one-ended spanning 
tree of $T^\delta$. Furthermore the extensions inside and outside $C^\d$ do not 
depend on each other (in the sense that given an extension 
outside, every extension inside is possible) because the edge $e^\delta$ is never used. The possible 
extensions in $D^\d$ are exactly the wired trees of $D^\delta$ (where we glue 
together all the degree $0$ vertices). 

Let $F$ be an arbitrary extension of $P^\d$ into a one-ended tree in $\delta T$. Notice that $F^\dagger$ can be decomposed into 
$F^\dagger = F_\inte^\dagger \sqcup F_\ext^\dagger \sqcup (e^\d)^\dagger$, 
where 
$F_\inte^\dagger$ is the portion of $F^\dagger$ spanned by vertices lying 
completely inside $C^\d$, $F_\ext^\dagger$ is the portion of $F^\dagger$ 
spanned by vertices lying strictly outside $C^\d$ and $(e^\d)^\dagger$ is the 
dual edge to $e^\delta$. Furthermore $F_\ext^\dagger$ and $F_\inte^\dagger$ are 
joined together by $(e^\d)^\dagger$ oriented from $F_\inte^\dagger$ to 
$F_\ext^\dagger$. Hence the oriented tree $F_\inte^\dagger$ is a function of 
the oriented tree $F_\inte$.


\medskip Recall the map $M$ 
from \cref{def:tree_matching} which produces a matching from a one-ended spanning tree.
$M(F^\dagger)$ is a perfect matching of the whole hexagonal lattice (by 
\cref{prop:is_matching}). Let $h$ be the associated height function as in \cref{lem:heightwinding}, defined up to a global additive constant. Note that $F^\dagger$ contains no edge connecting the inside of $C^\delta$ with the outside of $C^\delta$ (except for $(e^\d)^\dagger$).
 %
 Therefore $M(F_\inte^\dagger)$ is a perfect 
matching 
of $U^\d$. It then follows from 
Theorem 3.4 in \cite{dimer_tree} (applied to $F^\dagger_\inte$) 
that the image of the wired UST measure on $D^\d$ is 
the 
uniform dimer measure on $U^\d$. 
Furthermore, for any $v \in \partial 
U^\d$, $h(v) - h(x^\d)$ is the intrinsic winding of the oriented path from 
$\psi(x^\d)$ to $\psi(v)$ in $C^\d$ (in the sense of 
\cref{lem:heightwinding}). Let us summarise our findings in the following 
proposition.


\medskip
\medskip

\begin{thm}\label{prop:domain}
The objects $U^\d, D^\d, x^\d,e^\d$ constructed above satisfy the following:
\begin{itemize}
\item The graph $U^\d$ is a subgraph of $\delta H$. Furthermore, the boundary faces of $U^\d$ (faces which have neighbour outside) can be joined together in the dual graph to form a simple loop and this loop approximates $U$ in 
 the uniform norm as unparametrised curved with $x^\d \to x \in \partial U$. Further $D^\d$ is a 
subgraph of $T^\d$.
\item The images by the construction of \cref{def:tree_matching} of the wired 
UST in $D^\d$ with the dual tree oriented towards $e^\d,$ are 
uniform dimer configurations $U^\d$.
\item For any $v \in \partial 
U^\d$, $h(v) - h(x^\d)$ is the intrinsic winding of the boundary of $D^\d$ 
between 
$\psi(v)$ and $\psi(x^\d)$ (more precisely, it is the winding of $P^\d$ 
constructed above between 
$\psi(v)$ and $\psi(x^\d)$ in the sense of 
\cref{lem:heightwinding}).
\end{itemize}
\end{thm}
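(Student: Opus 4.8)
The plan is to recognise that the statement is a summary of the properties already established during the construction carried out immediately above it; accordingly the proof amounts to matching each of the three bullets to the relevant construction step and invoking the earlier results \cref{prop:planarity2}, \cref{prop:is_matching}, \cref{lem:heightwinding}, and Theorem 3.4 of \cite{dimer_tree}. I would treat the three items in turn.

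For the first item, $U^\d \subset \delta \cH$ and $D^\d \subset T^\d$ hold by definition. That the boundary faces of $U^\d$ join into a single simple dual loop is precisely \cref{prop:planarity2} applied to $C^\d$: the cycle $\psi^{-1}(C^\d)$ is self-avoiding in $\delta \cH^\dagger$, and its faces are by definition the boundary faces of $U^\d$. For the uniform approximation I would combine the already-established convergence $C^\d \to \partial \tilde U$ with point (i) of \cref{prop:prop_geometrique}: since $\psi - \ell$ is bounded, after rescaling $\psi^{-1}$ differs from the Lipschitz map $\ell^{-1}$ by $O(\delta)$, and $\ell^{-1}(\partial \tilde U) = \ell^{-1}(\ell(\partial U)) = \partial U$. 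Uniform convergence of curves is preserved under $\ell^{-1}$ up to this vanishing error, so $\psi^{-1}(C^\d) \to \partial U$ as unparametrised curves, with $x^\d \to x$ as arranged in the construction.

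For the second item I would recall the extension of $P^\d$ into a one-ended spanning tree $F$ and the decomposition $F^\dagger = F_\inte^\dagger \sqcup F_\ext^\dagger \sqcup (e^\d)^\dagger$. Because $e^\d$ is never used, the interior and exterior extensions do not constrain one another, and the interior extensions are exactly the wired trees of $D^\d$. Then $M(F_\inte^\dagger)$ is a perfect matching of $U^\d$ (this is \cref{prop:is_matching} applied to the interior piece), and Theorem 3.4 of \cite{dimer_tree}, applied to $F^\dagger_\inte$, identifies the pushforward of the wired UST on $D^\d$ (with the dual tree oriented towards $e^\d$) with the uniform dimer measure on $U^\d$.

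For the third item, by \cref{lem:heightwinding} the difference $h(v) - h(x^\d)$ equals $\tfrac{1}{2\pi} W_\i$ of the path in $F$ joining $\psi(x^\d)$ and $\psi(v)$. The one genuinely load-bearing observation is that both $\psi(x^\d)$ and $\psi(v)$ lie on $C^\d$, hence on the embedded path $P^\d \subset F$; therefore the unique $F$-path between them is the sub-path of $P^\d$, which is independent of how $F$ was extended in the interior. This gives the claimed intrinsic winding of $P^\d$, equivalently of $C^\d$ since the two differ only by the removed edge $e^\d$. I expect the main obstacle to be purely bookkeeping: confirming that this height difference is a function of the boundary loop alone and not of the interior tree, and checking that the orientation and sign conventions for $W_\i$ and for the reference flow in \cref{lem:heightwinding} agree with those fixed for $C^\d$. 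All the analytic content has already been produced upstream.
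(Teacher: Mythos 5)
Your proposal is correct and takes essentially the same route as the paper: there, \cref{prop:domain} is introduced explicitly as a summary of the construction preceding it, and your matching of the three bullets --- \cref{prop:planarity2} together with the bound on $\psi-\ell$ for the boundary loop and $x^\delta \to x$, the decomposition $F^\dagger = F^\dagger_{\mathrm{int}} \sqcup F^\dagger_{\mathrm{ext}} \sqcup (e^\delta)^\dagger$ plus \cref{prop:is_matching} and Theorem 3.4 of \cite{dimer_tree} for the wired UST/uniform dimer correspondence, and \cref{lem:heightwinding} for the winding along $P^\delta$ --- is exactly the paper's argument. Your explicit observation that the $F$-path between $\psi(v)$ and $\psi(x^\delta)$ is a sub-path of $P^\delta$, hence independent of the interior extension, is a correct detail that the paper leaves implicit.
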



\begin{figure}[h]
\includegraphics[scale = 1]{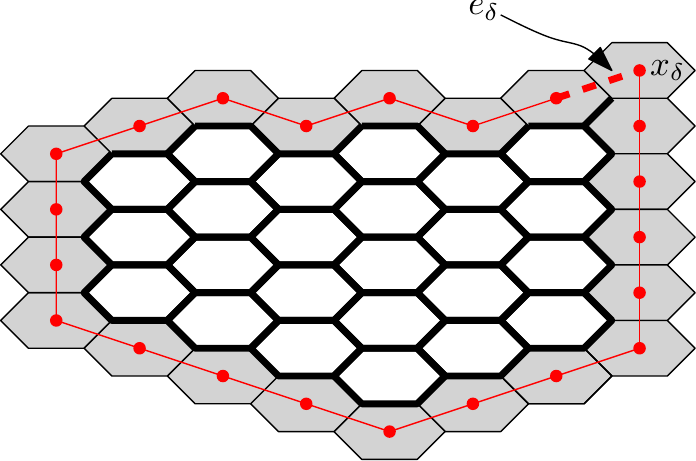}\hspace {1 cm}
\includegraphics[scale = 1]{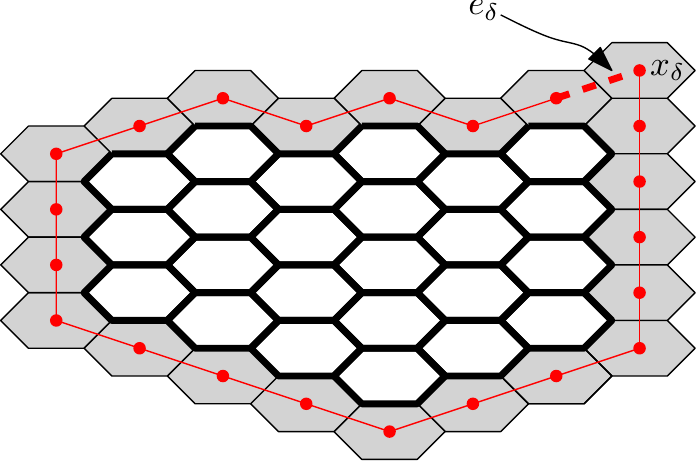}
\caption{Illustration of the construction of the domain $U^\delta$. The red loop (corresponding to the grey hexagons) is $\psi^{-1}(C^\d)$. The dotted red edge is $e^\delta$. On the left, the bold hexagons form $\tilde U^\d$. On the right we delete a vertex from $\tilde U^\d$ to obtain $U^\d$. In this case, $x^\delta$ is not actually adjacent to $U^\delta$.}\label{fig:domain}
\end{figure}

\begin{remark}
Given our choice of reference flow, the third point in \cref{prop:domain} 
together with \cref{rem:O1} implies that the height function on $U^\delta$, viewed as a surface in $2+1$ dimensions via cube stacks, lies within $O(1)$ of the plane $\cP$. The domains of 
 $U^\d$ constructed here can be thought of as a natural generalisation of \textbf{Temperleyan 
domains} in the case of arbitrary slope and to the hexagonal lattice.

Note that there exist subsets of the hexagonal 
lattice that admit dimer configurations but are not Temperleyan in the above sense.
\end{remark}

\subsection{Local limits}\label{sec:local}

In this section we show that the mapping $M$ between a full plane uniform spanning tree 
measure in a \textit{translation invariant version} of the T-graph corresponding to a certain slope produces 
a translation invariant ergodic Gibbs measure on dimer configurations with the same slope. The translation invariant version amounts to randomising the T-graph by picking a $\lambda $ which is uniform on the unit circle as explained in \cref{prop:translation}.
To emphasise a bit more, it is straightforward to see from locality of the map 
$M$ that once we establish a local limit for the trees the corresponding dimer 
configuration is Gibbs. Also since the T-graph is translation invariant, so is the dimer measure. However, translation invariant Gibbs measures are not unique and we need an 
extra argument to identify the limit. 

Note that it is not obvious from classical electrical network argument that the 
local limit of wired UST exists since our graph is oriented. However we have
established this in Corollary 4.20 in \cite{BLR16}, using the uniform crossing estimate of \cref{prop:satisfy_crossing}.

\begin{thm}\label{T:gibbs}
Fix $p_a, p_b, p_c >0$ be such that $p_a + p_b + p_c =1$.  Let $\lambda$ be picked uniformly at random from the unit circle. Let $T= T_{\lambda \Delta}$ be a (random) 
$T$-graph associated with $\lambda $  and $p_a,p_b,p_c$ as in \cref{prop:translation}. Let $\mu_F$ denote the uniform 
spanning tree measure on $T$ defined by taking a local limit of wired domains as 
in Corollary 4.20 in \cite{BLR16}. Then the random dimer 
configuration $M(\mu_F)$ (as in \cref{def:tree_matching}) is the unique translation invariant ergodic 
Gibbs measure on full plane lozenge tiling with probabilities $p_a, p_b, p_c$. 
\end{thm}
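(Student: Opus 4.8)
The plan is to identify $M(\mu_F)$ with the measure $\mu$ of \cref{thm:uniqueness_dimers}. Concretely, I will verify that $M(\mu_F)$ is a translation invariant Gibbs measure whose lozenge densities are a.s.\ equal to $(p_a,p_b,p_c)$, and then invoke the uniqueness of the ergodic Gibbs measure of a given slope to conclude (after an ergodic decomposition) that $M(\mu_F)=\mu$. Thus the work splits into: the Gibbs property, translation invariance, the value of the slope, and the identification of the ergodic components.

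The Gibbs property and translation invariance are the soft inputs. The map $M$ of \cref{def:tree_matching} is a local function of the oriented dual tree (the match of a white vertex $w$ is read off from the unique outgoing dual edge at $\psi(w)$), hence continuous for the local topology; therefore $M(\mu_F)$ is the local limit of the measures $M(\text{wired UST on }D^\d)$, which by \cref{prop:domain} are the uniform dimer measures on $U^\d$. A uniform finite-volume dimer measure satisfies the DLR equations inside every subregion, and these consistent conditional descriptions pass to the local limit, so $M(\mu_F)$ is an infinite-volume Gibbs measure. For translation invariance, \cref{prop:translation} shows that for $\lambda$ uniform on the circle the law of $T_{\lambda\Delta}$ is $\Z^2$-invariant, since a translation by $(m,n)$ replaces $\lambda$ by $\lambda(\tfrac\beta\gamma)^m(\tfrac\beta\alpha)^n$ and the uniform law is preserved; as both the local-limit construction of $\mu_F$ and $M$ commute with translations, the law of $M(\mu_F)$ is translation invariant.

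The crux is the slope. I would first show that the expected dimer densities are $(p_a,p_b,p_c)$, equivalently that $M(\mu_F)$ has zero mean height slope with respect to the reference $\phi_{p_a p_b p_c}$. By \cref{lem:heightwinding} the increments of the height function (taken with the reference $\phiref$) equal $\tfrac1{2\pi}$ times the intrinsic winding of the corresponding tree branch, so this mean slope is the mean winding per unit length; the asymptotic linearity $\psi\approx\ell$ of \cref{prop:prop_geometrique}(i) — into which the triangle angles $\pi p_a,\pi p_b,\pi p_c$ are built — should force the branches to wind only sublinearly, so that the mean slope with respect to $\phiref$ vanishes. Since $\phiref$ and $\phi_{p_a p_b p_c}$ differ by the gradient of a bounded function (\cref{rem:O1}), the mean slope with respect to $\phi_{p_a p_b p_c}$ vanishes too, i.e.\ the expected densities are $(p_a,p_b,p_c)$. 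One must then upgrade ``in expectation'' to ``almost surely'': the density vector is a translation invariant observable, so by the ergodic theorem it is constant on ergodic components, and one argues it is a.s.\ equal to $(p_a,p_b,p_c)$ using that the translation action on $\lambda$ is a rotation with dense orbits for a.e.\ $\lambda$ together with a tail-triviality input for the UST local limit.

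Finally I would close the argument by ergodic decomposition. Ergodic components of a translation invariant Gibbs measure are again translation invariant Gibbs measures, each with a well-defined slope equal to $(p_a,p_b,p_c)$ by the previous step; by the uniqueness in \cref{thm:uniqueness_dimers} each component coincides with $\mu$, whence $M(\mu_F)=\mu$ is itself the asserted unique ergodic measure. The main obstacle is plainly the slope: extracting the densities $(p_a,p_b,p_c)$ from the combinatorics of the T-graph without circularity, and in particular promoting the expected value to an almost sure statement, which is precisely what the ergodic-decomposition step consumes.
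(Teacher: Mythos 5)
Your skeleton agrees with the paper's: establish the Gibbs property via locality of $M$ and the finite-volume correspondence of \cref{prop:domain}, get translation invariance by averaging over $\lambda$ via \cref{prop:translation}, then decompose $\mu_{\text{dim}} = \int \mu_s \, d\nu(s)$ into ergodic components and invoke \cref{thm:uniqueness_dimers}. The gap is exactly where you locate the crux: identifying the slope, and your argument for it does not work as stated, for two separate reasons. First, the claim that the near-linearity $\psi \approx \ell$ of \cref{prop:prop_geometrique}(i) ``should force the branches to wind only sublinearly'' is not a proof and cannot become one from graph geometry alone: the winding of branches is a property of the \emph{law} of the tree, not of the embedding. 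For any slope $s \neq s_0$, the ergodic Gibbs measure $\mu_s$ lives on the same hexagonal lattice, its height grows linearly, and by \cref{lem:heightwinding} the corresponding tree paths on the very same T-graph wind linearly; so the geometry of $T$ does not preclude linear winding, and some input about how $\mu_F$ was constructed (as a limit of wired USTs on domains with $O(1)$ boundary winding) is indispensable. Second, even granting the mean slope, the identity $\E_{\mu_{\text{dim}}}[h(x)] = \E_{\mu_{s_0}}[h(x)] + \int l_s(x)\, d\nu(s)$ together with $\E_{\mu_{\text{dim}}}[h(x)] = o(\abs{x})$ only gives $\int l_s \, d\nu = 0$, i.e.\ the $\nu$-\emph{average} of the slopes is $s_0$; since $l_s$ is linear in $s$, a measure $\nu$ spread symmetrically about $s_0$ also satisfies this, so it does not yield $\nu = \delta_{s_0}$. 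You acknowledge this and invoke ``tail triviality of the UST local limit'' plus density of the rotation orbit of $\lambda$, but neither is established in the paper nor obviously available, and ergodicity of the joint system (T-graph, tree) under translations is essentially as hard as the theorem itself.

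The paper closes this gap quantitatively rather than softly. Assuming $\nu$ is not a Dirac mass at $s_0$, so that $\nu(\Sigma) \geq 4\eps$ for $\Sigma = \{ s : \exists x,\ l_s(x) \geq 2\eps \abs{x}\}$, it plays off two concentration estimates from \cite{TL14}: the fluctuation bound \eqref{eq:cont1} for each ergodic $\mu_s$ (so that for $s \in \Sigma$ the height reaches $\eps\sqrt{L}$ somewhere in $B(0,\sqrt{L})$ with probability at least $1-\eps$), and the bound \eqref{eq:cont2} for the \emph{uniform} measure $\mu_U$ on the finite domains of \cref{prop:domain} approximating $B(0,L)$, whose boundary heights are $O(1)$ by construction (planar boundary). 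The coupling of Corollary 4.2 of \cite{BLR16} between $\mu_U$ and $\mu_{\text{dim}}$ inside $B(0,\sqrt{L})$ then transfers the large-height event from $\mu_{\text{dim}}$ (probability $\geq 3\eps$) to $\mu_U$ (probability $\geq 2\eps$), contradicting \eqref{eq:cont2}. This is the input your route is missing: the slope must be pinned down through the finite-volume approximation with planar boundary data and a finite-to-infinite volume coupling, not through the geometry of $\psi$ or an unproved ergodicity statement.
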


\begin{proof}
First, condition on $\lambda$ and take a sequence of domains as described in \cref{prop:domain} 
which exhaust the plane. Thus there is a 
measure preserving transformation between uniform spanning trees in this 
sequence and the uniform dimer configurations in these domains (i.e. with 
weight $1$ for each dimer). Since the transformation going from tree to a dimer 
configuration is local, we take limits of the uniform spanning trees via 
Corollary 4.2 in \cite{BLR16} and obtain that $M(\mu_F)$ is a 
Gibbs measure on lozenge tilings of the full plane. We now average over $\lambda$ to obtain a translation invariant Gibbs measure on dimer configurations which we write $\mu_{\text{dim}}$. We 
can decompose $\mu_{\text{dim}}$ into its ergodic parts which are fully characterised by 
\cref{thm:uniqueness_dimers}:
 $$
 \mu_{\text{dim}} = \int \mu_s d \nu(s)
 $$ for some measure 
$\nu$ on $S=\{s=(p_a, p_b, p_c)  \in [0,\infty)^3:  p_a + p_b + p_c =1 \}$.

Let $s_0= (p_a,p_b,p_c)$ denote the triplet used to construct the T-graph. 
Recall that if we measure height functions using the reference flow defined 
from the T-graph (see \cref{rem:O1}), we have  $\E_{\mu_{s_0}}[h(x)] = 
O(1)$ for all $x \in \cH^\dagger$ and $h(0) = 0$ (in the full plane we 
always need to pin the height at one point). For other measures it is also clear by \cref{thm:uniqueness_dimers} 
that $$\E_{\mu_s}[h(x)] - \E_{\mu_{s_0}} [h(x)] = l_s(x)$$ where $l_s$ is a 
non-zero linear function. Furthermore it is known that for any $s \in S$, the fluctuations 
around the mean of the height is at most $L^{1/4}$ with very high probability  
(in fact the right order is logarithmic, but not essential for this proof). 
For instance, it follows from \cite{TL14}, Theorem 2.8 for all $L >0$, for 
all $x \in B(0,L) \cap \cH^\dagger$,

\begin{equation}
 \P_{\mu_s}(|h(x) - \E_{\mu_{s}}(h(x))| > L^{1/4} ) <L^{-10} \label{eq:cont1}.
\end{equation}

%

Let us assume by contradiction the measure $\nu$ is not a Dirac mass at $s_0$. 
Consider the domain $U$ approximating $B(0,L )$ as in the 
construction of \cref{prop:domain} for a large radius $L$. On 
one hand, Theorem 2.9 in \cite{TL14} shows that under the uniform measure on 
$U$, 
\begin{equation}
 \P\Big(\sup_{x \in B(0,L)} \abs{h(x)} > L^{1/4}\Big) \leq L^{-10},
\label{eq:cont2}
\end{equation}
where the height $h(x)$ is measured with respect to the reference flow on the T-graph.  Since $\nu$ is not a Dirac mass at $s_0$, we can find some $\varepsilon >0$ such 
that the $\nu$ measure of $$\Sigma:= \{ s : \exists x, l_s(x) \ge 2\varepsilon 
|x|\}  $$ is at least $ 4 \varepsilon$. By \eqref{eq:cont1}, for any 
$L$ large enough we have 
$$
 \forall s \in \Sigma, \quad  \P_{\mu_s} \left( \sup_{x \in B(0, \sqrt L)} 
\abs{h(x)}\geq \varepsilon \sqrt L \right) \geq 1-\varepsilon ,
$$
where $h$ is still measured with respect to the reference flow. 
Along with the bound on $\nu(\Sigma)$, this implies
$$
\P_{\mu_{\text{dim}}} \left( \sup_{x \in B(0, \sqrt L)} \abs{h(x)}\geq \varepsilon \sqrt L \right) \geq 3 \ve.
$$
Now by the coupling of Corollary 4.2 in \cite{BLR16}, we can couple the dimer 
configuration in $B(0, \sqrt L)$ under the uniform measure $\mu_U$ in $U$ and 
the whole plane measure $\mu_{\text{dim}}$ with probability at least $1-\varepsilon$ (actually this was shown even conditioned on $\lambda$). In 
particular
$$
\P_{\mu_U} (\sup_{x \in B(0, \sqrt L)} \abs{h(x)}\geq \varepsilon \sqrt L) \ge 
\P_{\mu_{\text{dim}}} \Big(\sup_{x \in B(0, \sqrt L)} \abs{h(x)}\geq \varepsilon \sqrt L\Big) 
-\varepsilon \ge 2\varepsilon.
$$
This is a contradiction with \eqref{eq:cont2}.
\end{proof}

\bibliographystyle{abbrv}

\bibliography{winding.bib}

\end{document}